 \def \v {\vskip 0.2cm }
 \def \V {\vskip 0.5cm} 
\def \bA {\breve A}
\def \tA{\tilde A}
\def \bx {{\bf x}}
\def \bv {{\bf v}}
\def \G {\Gamma}
\def \g {\gamma}
\def \a {\alpha}
\def \b {\beta}
\def \k {\kappa}
\def \de {\delta}
\def \D {\Delta}
\def \vep {\varepsilon}
\def \s {\sigma}
\def \vp {\varphi}
\def \l {\lambda}
\def \n {\noindent}
\def \Tr {{\mathrm{\, Tr}}}
\def \bC {{\mathbb C}}
\def \bE {{\mathbb E}}
\def \bR {{\mathbb R}}
\def \CA {{\cal A}}
\def \CE {{\cal E}}
\def \CN {{\cal N}}
\def \CD {{\cal D}}
\def \D {\Delta}
\def \hD {\hat \Delta}
\def \a {\alpha}
\def \U {\Upsilon} 
\def \l {\lambda}
\begin{document}



\title{Asymptotic absence of poles of Ihara zeta function
of large Erd\H os-R\'enyi random graphs}



\author{Oleksiy Khorunzhiy}
\address{Universit\'e de Versailles - Saint-Quentin, 45, Avenue des Etats-Unis, Versailles 78035, France}
\email{oleksiy.khorunzhiy@uvsq.fr}



\BeginPaper 



\newcommand{\ep}{\varepsilon}
\newcommand{\eps}[1]{{#1}_{\varepsilon}}



\begin{abstract}
Using recent results on the concentration of the largest  eigenvalue and maximal vertex degree
of large random graphs, we show  
that the infinite sequence of Erd\H os-R\'enyi random graphs $G(n,\rho_n/n)$ such that 
$\rho_n/\log n$ infinitely increases as $n\to\infty$ verifies
a version of the graph theory Riemann Hypothesis.
\key{random graphs, random matrices, Ihara zeta function, graph theory Riemann hypothesis}
\msc{05C80,11M50,15B52,60B20}
\end{abstract}


\section{Ihara zeta function and graph theory Riemann hypothesis}\label{S1}

Given a finite connected non-oriented graph 
$\G = (V,E)$ with the vertex set $V= (\a_1, \dots, \a_n)$ and the edge set 
$E$, 
the Ihara zeta function (IZF) $Z_\G (u)$ is determined for sufficiently small $|u|$ by 
equality
\begin{equation}\label{(1.1)}
Z_\G(u) = \prod_{ [C]} \left(1 - u^{\nu(C)}\right)^{-1},
\end{equation}
where $[C]$ denotes the equivalence class of closed primitive backtrackless tailless paths $C$ and $\nu(C)= k-1$, $k$ being the length of $C$ \cite{I}. 
The $k$-step path over graph 
$C= (\a_{i_1}, \a_{i_2}, \dots, \a_{i_{k-1}}, \a_{i_k})$,
$\{\a_{i_l}, \a_{i_{l+1}}\} \in E$ is closed  when $\a_{i_k}=\a_{i_1}$. 
The path $C$ is backtrackless if $\a_{i_{l-1}}\neq \a_{i_{l+1}}$ for all
$l=2, \dots, k-1$. The path $C$ is tailless if 
$\a_{i_2}\neq \a_{i_{k-1}}$.
The equivalence class $[C]$ includes $C$ and all paths obtained from $C$ with the help of all cyclic 
permutation of its elements. 
The closed path $C$ is primitive if  	there is no smaller path
$\tilde C$ such that $C= \tilde C^k$.

Zeta function  \eqref{(1.1)} has been introduced
by Y. Ihara in the algebraic context \cite{I}. 
Ihara's theorem says that the IZF  \eqref{(1.1)} is the reciprocal of a polynomial and that for sufficiently small
$\vert u\vert$
\begin{equation}\label{(1.2)}
Z_\G(u)^{-1} = (1-u^2)^{r-1} 
\det\big( I + u^2(B-I) - uA\big), \quad u\in \bC, 
\end{equation}
where $A= (a_{ij})_{i,j=1, \dots, N}$ is the adjacency matrix of 
$\G$, $B= \hbox{diag}\left(\sum_{j=1}^N a_{ij}\right)$ and 
\begin{equation}\label{(1.3)}
r-1= \Tr (B-2I)/2.
\end{equation}
Let us note that the Ihara zeta function can also be determined as the exponential expression
\cite{Te},
\begin{equation}\label{(1.4)}
Z_\G (u) = \exp\left\{ \sum_{k\ge 1} {\CN_k\over k} u^k\right\},
\end{equation}
where $\CN_k$ is the number of classes of closed backtrackless tailless 
primitive 
paths of the length $k$ over the edges of $\G$. 
The Ihara's theorem has been proven initially for 
$q+1$-regular graphs, then it has been generalized by Bass
to the cases of  possibly irregular graphs \cite{Ba} (see also \cite{FZ}).

There exists an analog of the Riemann hypothesis formulated 
for $q+1$-regular graphs $\G=X^{(q+1)}$ with the help of the Ihara zeta function. According to the definition by Stark and Terras \cite{ST}, 
a graph $X^{(q+1)}$ verifies 
the  {\it graph theory
Riemann hypothesis} (GTRH)
iff its Ihara zeta function is such that 
\begin{equation}\label{(1.5)}
\hbox{Re\,}s\in (0,1) \ {\hbox{and }}\left( Z_{X^{(q+1)}}\big(q^{-s}\big)\right)^{-1}=0 \ \ \ \hbox{ imply } \ 
\hbox{Re\,} s = {1\over 2}.
\end{equation}
This relation means that the graph $X^{(q+1)}$ is such that 
there is no poles of the Ihara zeta function $Z_X(u)$ in the disk  
$1/q< |u|< 1$ excepting those situated on the circle $|u|= {1/\sqrt q}$. 
The following statement is formulated 
by Stark and Terras \cite{ST} as a corollary of the formula \eqref{(1.2)}.

\v 
\begin{lemma}{\label{L:ST}}  A finite $(q+1)$-regular graph $X^{(q+1)}$ satisfies the Riemann hypothesis iff for every eigenvalue $\l$ of its adjacency matrix 
$A_X$, we have
\begin{equation}\label{(1.6)}
\vert \l\vert \neq q+1 \ \ \ \hbox{implies} \ \ \ 
\vert \l \vert \le 2 
\sqrt q.
\end{equation}
\end{lemma}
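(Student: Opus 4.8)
The plan is to specialize the Bass--Ihara determinant formula \eqref{(1.2)} to the regular case and then read off the location of the zeros of $Z_{X}^{-1}$ directly from the spectrum of $A_X$. Since $X=X^{(q+1)}$ is $(q+1)$-regular one has $B=(q+1)I$, hence $B-I=qI$, and \eqref{(1.2)} becomes
\begin{equation*}
Z_{X}(u)^{-1}=(1-u^2)^{r-1}\,\det\big((1+qu^2)I-uA_X\big)=(1-u^2)^{r-1}\prod_{j}\big(qu^2-\l_j u+1\big),
\end{equation*}
where the product runs over the eigenvalues $\l_j$ of the symmetric matrix $A_X$ (hence real) counted with multiplicity. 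Thus the zero set of $Z_X^{-1}$ is the union of $\{u=\pm1\}$ (present only when $r>1$) and, for each eigenvalue $\l=\l_j$, the two roots of the quadratic $q_\l(u):=qu^2-\l u+1$. The roots $u=\pm1$ lie on the circle $|u|=1$, i.e. on the boundary of the open annulus $1/q<|u|<1$ in which \eqref{(1.5)} imposes its restriction (recall the equivalence $u=q^{-s}$, $\mathrm{Re}\,s\in(0,1)\Leftrightarrow 1/q<|u|<1$, $\mathrm{Re}\,s=1/2\Leftrightarrow|u|=1/\sqrt q$), so they are irrelevant and only $q_\l$ matters.

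The decisive observation is that the two roots $u_+,u_-$ of $q_\l$ satisfy $u_+u_-=1/q$, hence are exchanged by inversion through the circle $|u|=1/\sqrt q$. I would distinguish three cases by the position of $|\l|$ relative to the ramification value $2\sqrt q$ (where the discriminant $\l^2-4q$ vanishes) and to $q+1$ (the maximal possible value of $|\l|$ for a $(q+1)$-regular graph). \emph{(i)} If $|\l|\le 2\sqrt q$ then $\l^2-4q\le0$: the roots are complex conjugate when the inequality is strict and equal to $\l/(2q)=\pm1/\sqrt q$ when it is an equality, so in both cases $|u_\pm|^2=u_+u_-=1/q$ and the roots lie exactly on $|u|=1/\sqrt q$, consistently with \eqref{(1.5)}. \emph{(ii)} If $|\l|=q+1$ then $q_\l$ factors as $(u\mp1)(qu\mp1)$, whose roots $\pm1,\pm1/q$ lie on the boundary of the annulus and impose nothing. \emph{(iii)} If $2\sqrt q<|\l|<q+1$ (say $\l>0$; the case $\l<0$ is identical after $u\mapsto-u$) then $q_\l$ has two distinct positive real roots; since $q_\l(1/\sqrt q)=2-\l/\sqrt q<0$ and $q_\l(1)=q+1-\l>0$, the larger root $u_+$ lies in $(1/\sqrt q,1)$, and then $u_-=1/(qu_+)$ lies in $(1/q,1/\sqrt q)$. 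This $u_-$ is a zero of $Z_X^{-1}$ inside the open annulus $1/q<|u|<1$ but off the circle $|u|=1/\sqrt q$, so \eqref{(1.5)} fails.

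Assembling the cases: $Z_X$ has a pole in $1/q<|u|<1$ away from $|u|=1/\sqrt q$ if and only if $A_X$ has an eigenvalue with $2\sqrt q<|\l|<q+1$. Because all eigenvalues of $A_X$ are real with $|\l|\le q+1$ (the all-ones vector realising $\l=q+1$), the negation of the latter condition is exactly ``$|\l|\neq q+1\Rightarrow|\l|\le2\sqrt q$'', which is \eqref{(1.6)} and yields the claimed equivalence. The computations are elementary; the one step that needs a little care is case \emph{(iii)} --- checking that the ``inverted'' root $u_-$ genuinely lands strictly inside the annulus and strictly off the critical circle --- together with the bookkeeping of the sign of $\l$ and of the bipartite eigenvalue $\l=-(q+1)$.
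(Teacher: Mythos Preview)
Your proof is correct and follows essentially the same route as the paper: specialize \eqref{(1.2)} to $B=(q+1)I$, factor $Z_X^{-1}$ into the quadratics $qu^2-\l u+1$, and use Vieta (product of roots $=1/q$, equivalently the paper's $\alpha_j\beta_j=q$ for the reciprocals) together with the sign of the discriminant $\l^2-4q$ to locate the poles relative to the circle $|u|=1/\sqrt q$. Your case~\emph{(iii)} is in fact more explicit than the paper's version, which only records that $|\l_j|<q+1$ forces $0<\mathrm{Re}\,s<1$ and leaves implicit the point you verify via the sign changes $q_\l(1/\sqrt q)<0<q_\l(1)$, namely that for $2\sqrt q<|\l|<q+1$ the two real roots lie strictly on opposite sides of $|u|=1/\sqrt q$ inside the open annulus.
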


We reproduce  the proof of this lemma  in Section \ref{S4} of the present paper.
It is fairly simple and uses an elementary  observation that 
in the case of $q+1$-regular graphs, relations 
\eqref{(1.2)} and \eqref{(1.5)} reduce the problem 
to the study of  zeroes of the  quadratic 
equation 
$$
1+qu^2 - \l u= 0,
$$ 
whose discriminant
is negative if and only if $| \l |<2\sqrt q$.

 \v 
Relation   \eqref{(1.5)} can be reformulated
in more convenient for us form  as follows:

\noindent for any  complex $v\in D_q$, 
\begin{equation}\label{(1.7)}
D_q = \left\{ z\in \bC: {1\over \sqrt q}< \vert z \vert <\sqrt q\right\}, 
\end{equation}
 the following statement 
\begin{equation}\label{(1.8)}	
 \left(Z_{X^{(q+1)}} \left({v\over \sqrt q}\right)\right)^{-1} = 0 
  \quad \hbox{implies} \quad 
\vert v\vert =1
\end{equation}
is true. 
This means that the regular graph $X^{(q+1)}$ verifies the GTRH
 if and only if the function 
\begin{equation}\label{(1.9)}
\Phi^{(q+1)}(v)= 
Z_{X^{(q+1)}}
\left({v\over \sqrt q}\right)
\end{equation}
has no poles in the region $v\in D_1^{(q)} \cup D_2^{(q)}$, where
\begin{equation}\label{(1.10)}
D_1^{(q)}= \left\{ v\in \bC: 1/\sqrt q<|v|<1\right\} \quad {\hbox{and}}
\quad D_2^{(q)} = \left\{v \in \bC: 1<|v|< \sqrt q\right\}.
\end{equation}

Relation \eqref{(1.6)} represents a widely known in graph theory and applications the second eigenvalue conjecture
(see \cite{A} and references therein).   This condition means that the distance
 between the maximal and the second maximal in absolute value eigenvalues
of a  $q+1$-regular graph is greater than $q+1-2\sqrt q$. 
The $(q+1)$-regular graphs that satisfy condition \eqref{(1.6)} are 
determined   by  Lubotzky, Phillips and Sarnak 
 as  the {\it Ramanujan graphs} \cite{LPS} (see also the review \cite{Mur}
 and references therein).   The Ramanujan  graphs are known to be good expanders that make good communication networks (see e.g. \cite{Fr,Mur}). This property can be explained by the observation  that the diameter of a $(q+1)$-regular graph  
 is minimized by minimizing the second maximal eigenvalue 
 because the maximal eigenvalue is always equal to  $q+1$ \cite{Chung,Quen,Mur}.
 It is proved by  Friedman \cite{Fr-1,Fr-2} 
 the proportion of $q+1$-regular graphs with $n$ vertices such that \eqref{(1.6)} is true with
 $2\sqrt q$ replaced by $2\sqrt q+ \vep$ goes to 1 as $n\to\infty$ 
for any $\vep>0$.

\v 
The notion of the Ramanujan graphs mostly concerns the regular graphs. 
 Several extensions of this notion has been considered \cite{L,ST}.
 A definition of the Ramanujan graphs in the case of 
 non-regular graphs has been proposed by Lubotzky \cite{L}. It   is given in terms of the eigenvalues of the adjacency matrix of a graph $\G$, its
spectral radius and the spectral radius of the adjacency operator on  the universal covering tree of $\G$.

 The case when $\G$ is chosen at random from the set of 
 all possible graphs of $n$ vertices has been considered in \cite{K-17}.
 More precisely, the eigenvalue distribution of the matrix 
 $I+H(u)= I + u^2(B-I) - uA$ has been studied, where 
 $A$ is the adjacency matrix of the  Ed\H os-R\'enyi random graphs $G(n,\rho_n/n)$ 
 (see the next section for the rigorous definition of the ensemble $G(n,p)$).
 It is shown that  in the limit $n\to\infty$ and $\rho_n /\log n\to\infty$ 
  the limiting eigenvalue distribution of $H(u)$
  with properly normalized parameter $u= v/\sqrt {\rho_n}$ 
  exists and its density is given by a shift of the Wigner semi-circle distribution. Then one can show that 
 the limit of the mean value of 
  ${1\over n} \log Z_{\G } (v/\sqrt{\rho_n})$, if it exists, 
  satisfies  a  version of  \eqref{(1.5)}.

Another approach allowing   
to include the case of  non-regular random  graphs into consideration  
is based on the following representation of zeta function \eqref{(1.1)},
\begin{equation}\label{(1.11)}
Z_\Gamma(u)^{-1}  = \det (I- uW_\Gamma),
\end{equation}
where $W_{\Gamma}$ is the {\it non-backtracking matrix} 
of $\Gamma$ (see \cite{Hashi,ST} and references therein). 
Representation \eqref{(1.11)} is known as the Ihara-Bass formula and can be 
serve  as the basis of the proof of the Ihara theorem \eqref{(1.2)} for connected graphs.

Regarding representation \eqref{(1.11)}, Stark and Terras defined $\Gamma$ to satisfy the graph theory Riemann hypothesis if the matrix $W_\Gamma $ has no eigenvalues 
with the absolute values inside of the interval  $(\sqrt{r_{W_\Gamma}}, r_{W_\G })$,
where $r_{W_\G }$ is the Perron-Frobenius eigenvalue of $W_\G $.
In paper \cite{BLM}, the spectral properties of the non-backtracking matrix $W_\G$ of the
 the  Erd\H os-R\'enyi random graphs $\Gamma \in G(n, p_n)$ have been studied.
 It is shown that the graphs $\G \in G(n,\alpha/n)$ 
verify a weak Ramanujan property in the sense that in the limit
$n\to\infty$, they satisfy with high probability the graph theory Riemann 
hypothesis formulated  above
\cite{ HST}. More precisely,   it is proved that 
$r_{W_\G}\sim \a$ and all other eigenvalues $\lambda$ of $W_\G$  verify
$| \lambda | \le \sqrt \a + o(1)$ with high probability as $n\to\infty$. 
It is stated  that in this sense, the Erd\H os-R\'enyi random graphs $G(n,\a/n)$ asymptotically satisfy the graph theory Riemann hypothesis
\cite{BLM}. In this work the term "with high probability" 
to indicate the situation when 
 one or another statement is true with probability 
$1+o(1)$, $n\to\infty$ i.e. tending to 1 as $n$ infinitely increases. 
This shows that  the results 
of \cite{BLM} are   much in the spirit of statements by Friedman cited above. 
 It should be also noted that the results of \cite{BLM} 
as well as those of \cite{Fr-1, Fr-2} are obtained with the help of the 
 study of moments of 
non-backtracking matrix $W_\G$.

The limiting eigenvalue distribution of non-backtracking matrices $W_\Gamma$
of Erd\H os-R\'enyi random graphs 
$G(n,p)$ have been also studied 
in the asymptotic regimes when either $p\approx Const$ or $p=o(1)$ as 
$n\to\infty$ \cite{WW}. More fine spectral characteristics
of $W_\G$ such as the presence of isolated eigenvalues inside and outside
of the bulk of the spectrum of $W_\G$ have been considered 
in \cite{CZ} for a  generalization 
of the Erd\H os-R\'enyi random graphs $G(n,\alpha /n)$
in the case when $\alpha/\log n\to \infty$ as $n\to\infty$. 
It is shown, in particular, that 
with probability $1-o(1)$ all eigenvalues of $W_\G$ are located 
on the distance $o(\sqrt\alpha)$ of a circle of radius $\sqrt{\a-1}$
excepting two ones that are close to $1$ and $\a$ as $n\to\infty$. 
Since eigenvalues of $W_\G$ determine uniquely the poles of $Z_\G(u)$ \eqref{(1.11)},
the results of \cite{CZ} can be interpreted in the sense that 
the proportion of Erd\H os-R\'enyi random graphs 
that verify the graph theory Riemann hypothesis \eqref{(1.5)} tends to one
as $n\to\infty$. This formulation put 
 \cite{CZ} in line with the works \cite{Fr-1,Fr-2} and \cite{BLM}.
 The results of \cite{CZ} 
 are obtained on the base of the known facts from spectral properties
 of the adjacency matrices $A_\G $ of random graphs combined with the 
 concentration results on the elements of $B$ and   perturbation theorems by  Bauer and Fike allowing one to study the spectrum of 
 the non-backtracking matrix $W_\G$
 on the base of the knowledge of that of 
 the adjacency matrix $A_\G$ (see also \cite{BLM}).

In this paper, we follow the approach of \cite{K-17}
based on the  study of the spectrum of 
$I+u^2(B-I) - uA$ of the right-hand side of \eqref{(1.2)}. 
This method  seems to be more simple and transparent than  that using  the 
non-backtracking matrix $W_\G$. 
Using  classical perturbation theorems of the Weyl type
for  singular values of matrices,  
 concentration properties of
$B$ \cite{K-20} and  recent results on the concentration 
of the maximal eigenvalue of $A_\G$ \cite{LMZ}, we prove a statement 
that can be regarded as an improvement of that  of   \cite{CZ}.  
Namely, we show  that the 
infinite series of probabilities 
of the events
that  
 the  normalized zeta function $Z_\G(v/\sqrt{\rho_n})$
 of the Erd\H os-R\'enyi random graphs $G(n,\rho_n/n)$ in the asymptotic regime when 
$\rho_n\gg \log n$ has a pole in any domain close to 
$D=\{v\in \bC: |v|\neq 1\}$
converges.
The result obtained in the present paper can be regarded as a  
one more confirmation of the conjecture  that 
almost all   Erd\H os-R\'enyi random graphs 
$\{G(n,p_n)\}_{n\ge 1}$ satisfy, in the limit  $n\to\infty$, $n p_n/\log n\to\infty$, a version of the graph theory Riemann hypothesis.

\section{Ihara zeta function of  Erd\H os-R\'enyi random graphs}\label{S2}

Let us consider a family of
jointly independent random variables 
$$
{\CA}_{n,\rho} = \{ a_{ij}^{(n,\rho)}, \ 1 \le i< j\le n\}
$$
that have the probability distribution
$$
a_{ij}^{(n,\rho)}= 
 \begin{cases}
  1 , & \text{with probability $p_n=\rho/n$} , \\
0, & \text {with probability $1-{\rho/n}\, $} 
\end{cases}, \   \  0<\rho<n.
$$ 
We assume that the  family $\CA_{n,\rho}$ is determined
on a  probability space $\Omega_{n,\rho}$
and denote by $\bE = \bE_{n,\rho}$ the mathematical expectation
with respect to the probability measure $P=P_{n,\rho}$ generated by 
$\CA_{n,\rho}$. 

The ensemble of real symmetric random matrices 
$
A^{(n,\rho)}$ with elements 
\begin{equation}\label{(2.1)}
\left(A^{(n,\rho)}\right)_{ij}=
\begin{cases}
  a^{(n,\rho)}_{ij} , & \text{if $i< j$} , \\
a^{(n,\rho)}_{ji}, & \text {if  $i>j $}, \\
0 , & \text{if $i=j$}, 
\end{cases}
\quad i,j \in \{1, \dots, n\}
\end{equation}
 can be regarded as the adjacency matrix 
 of a  non-oriented random graph $\G^{(n,\rho)}$. 
 The family of such random graphs $\{\G^{(n,\rho)}\}$ is  usually denoted by $G(n,p_n)$, where we have taken $p_n= \rho/n$.
 This family 
 is equivalent in many aspects 
to the ensemble  of random Erd\H os-R\'enyi graphs \cite{ER} and is often
referred simply as the  Erd\H os-R\'enyi random graphs (see monograph \cite{B}).

\v Given $\G^{(n,\rho)}$, we consider the corresponding right-hand side of \eqref{(1.2)}
an say that it determines the Ihara zeta function of $\G^{(n,\rho)}$
despite of the fact that this  graph can be disconnected,
\begin{equation}\label{(2.2)}
(1-u^2)^{r-1} \, \det\big( I + u^2(B^{(n,\rho)}-I) - uA^{(n,\rho)}\big)
= \left(Z_{\G^{(n,\rho)}}(u)\right)^{-1}, \quad u\in \bC,
\end{equation}
where 
\begin{equation}\label{(2.3)}
\left(B^{(n,\rho)}\right)_{ij} =\,  \delta_{ij}\  \sum_{k=1}^n \,  a^{(n,\rho)}_{ik}.
\end{equation}
According to \eqref{(1.3)}, we have 
denoted in \eqref{(2.2)} 
\begin{equation}\label{(2.4)}
r=\, {1\over 2} \,  \sum_{i,j=1}^n a^{(n,\rho)}_{ij} \, - n+1.
\end{equation}

We study IZF \eqref{(2.2)} in the limiting transition $n\to\infty$ when 
the average value of the vertex degree of $\G^{(n,\rho_n)}$ given by 
$(B^{(n,\rho_n)})_{ii}$ \eqref{(2.3)}
 goes to infinity
more rapidly than  $\log n$. This means that 
\begin{equation}\label{(2.5)}
n\to\infty,  \quad  \rho_n/\log n = \chi_n \to \infty.
\end{equation}
We denote this limiting transition by $(\rho, \chi)_n\to\infty$. 
Writing \eqref{(2.5)}, we assume that an infinite sequence $(\chi_n)_{n\ge 1}$ is determined
and $\rho_n$ is given by relation 
$$
\rho_n = \chi_n \log n, \quad n\ge 1.
$$
In what follows, we omit the subscript in $\rho_n$ when no confusion can arise. 

\v

In paper \cite{K-17}, it is shown that
in the limit \eqref{(2.5)}, it is natural to consider \eqref{(2.4)} 
with the spectral parameter $u$ normalized 
by the square root of $\rho= \rho_n$.
Thereby we introduce  a normalized version of Ihara zeta function \eqref{(2.2)} with 
the spectral parameter $u= v/\sqrt \rho$
$$
\tilde Z^{(n,\rho)}(v, \omega)= Z_{\G^{(n,\rho)}(\omega)}\left({v\over \sqrt \rho}\right), 
\quad \omega \in \Omega_{n,\rho_n}, \quad v\in \bC, 
$$
where 
\begin{equation}\label{(2.6)}
\left( Z_{\G^{(n,\rho)}}\left({v\over \sqrt \rho}\right)\right)^{-1} =
\left(1-{v^2\over \rho}\right)^{r-1} 
\det\left( -v H^{(n,\rho)}\right),
\end{equation}
where 
$$
H^{(n,\rho)} (v) =  
I+ {v^2\over \rho} \left( B^{(n,\rho)}-I\right) - 
{1 \over \sqrt \rho}  A^{(n,\rho)}. 
$$

Our main result is given by the following statement.

\begin{theorem}\label{T1}

Let  $D_{\vep, \vep'}$ with $\vep, \vep'>0$ be a union of two 
complex domains 
\begin{equation}\label{(2.7)}
D^{(1)}_{\vep, \vep'} = \{ z\in \bC: \vep' < |z |< 1-\vep\} \quad {\hbox{and}} \quad 
D^{(2)}_{\vep, \vep'} = \{ z\in \bC: 1+\vep < |z |< 1/\vep'\} . 
\end{equation}
We consider two subsets $\Phi^{(1)}_{n,\rho}\left(\vep, \vep'\right)$
and $\Phi^{(2)}_{n,\rho}\left(\vep, \vep'\right)$
determined by relation
\begin{equation}\label{(2.8)}
\Phi^{(i)}_{n,\rho}\left(\vep, \vep'\right)
= \left\{\omega \in \Omega: \,  {\hbox{there exists}}\  v\in D^{(i)}_{\vep, \vep'} \ {\hbox{such that }}  \  
\left(\tilde Z^{(n,\rho)}(v,\omega)\right)^{-1}=0 
\right\},
\end{equation}
for $i=1$ and $i=2$.
With the choice of 
\begin{equation}\label{(2.9)}
\vep_n = {2 \over (\chi_n)^{1/8}} \quad {\hbox{and }}
\quad  \vep '_n = {1\over 
\sqrt{ \rho_n} (1-\k)},\quad \kappa>0,
\end{equation}
the following series of probabilities converge,
\begin{equation}\label{(2.10)}
\sum_{n\ge 1}  P_{n,\rho_n}
\left(\Phi^{(i)}_{n,\rho_n}
\left(\vep_n, \vep'_n\right)
\right) <\infty, \quad i=1,2
\end{equation}
under asymptotic condition \eqref{(2.5)}.	
\end{theorem}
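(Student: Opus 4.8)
\noindent\emph{Proof plan.}\quad The plan is to convert the event ``$(\tilde Z^{(n,\rho_n)}(v,\omega))^{-1}=0$ for some $v\in D^{(i)}_{\vep_n,\vep'_n}$'' into ``the matrix $G(v)$ below is singular for some such $v$'', and then to show, via a Weyl--type inequality for singular values, that on an event $\Omega_n^{\ast}$ with $\sum_n P\bigl((\Omega_n^{\ast})^{c}\bigr)<\infty$ the matrix $G(v)$ is invertible \emph{simultaneously} for every $v\in D^{(1)}_{\vep_n,\vep'_n}\cup D^{(2)}_{\vep_n,\vep'_n}$; then $\Phi^{(i)}_{n,\rho_n}(\vep_n,\vep'_n)\subseteq(\Omega_n^{\ast})^{c}$, and \eqref{(2.10)} is precisely the summability of $P\bigl((\Omega_n^{\ast})^{c}\bigr)$. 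By the Ihara--Bass identity \eqref{(2.2)} with $u=v/\sqrt{\rho_n}$, one has $(\tilde Z^{(n,\rho_n)}(v,\omega))^{-1}=(1-v^2/\rho_n)^{r-1}\det\bigl(I+\tfrac{v^2}{\rho_n}(B^{(n,\rho_n)}-I)-\tfrac{v}{\sqrt{\rho_n}}A^{(n,\rho_n)}\bigr)$; on $D_{\vep_n,\vep'_n}$ the scalar prefactor is finite and nonzero since $v^2=\rho_n$ is excluded there ($\sqrt{\rho_n}>1/\vep'_n$), and $v\neq0$, so this vanishes iff $\det G(v)=0$, where
\[
G(v)\;=\;z(v)\,I\;-\;\frac{1}{\sqrt{\rho_n}}\,A^{(n,\rho_n)}\;+\;\frac{v}{\rho_n}\bigl(B^{(n,\rho_n)}-\rho_n I\bigr),\qquad z(v)=\frac1v+v\Bigl(1-\frac1{\rho_n}\Bigr),
\]
using the identity $I+\tfrac{v^2}{\rho_n}(B^{(n,\rho_n)}-I)-\tfrac{v}{\sqrt{\rho_n}}A^{(n,\rho_n)}=v\,G(v)$ (extract the scalar part $v(1-\rho_n^{-1})I$ of $\tfrac{v}{\rho_n}(B^{(n,\rho_n)}-I)$ and the factor $v$).

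Write $S=\rho_n^{-1/2}A^{(n,\rho_n)}$ and $\Delta(v)=\tfrac{v}{\rho_n}(B^{(n,\rho_n)}-\rho_n I)$, so $G(v)=(z(v)I-S)+\Delta(v)$. As $S$ is real symmetric, $z(v)I-S$ is diagonalised in the eigenbasis of $S$ with diagonal entries $z(v)-\mu_j$, hence $s_{\min}(z(v)I-S)=\mathrm{dist}(z(v),\mathrm{spec}(S))$, and the Weyl bound for singular values gives $s_{\min}(G(v))\ge\mathrm{dist}(z(v),\mathrm{spec}(S))-\|\Delta(v)\|$. I would then feed in two probabilistic inputs, holding on an event $\Omega_n^{\ast}$ with summable complement: from the concentration of $\lambda_{\max}(A^{(n,\rho_n)})$ \cite{LMZ} and the standard bound $\|A^{(n,\rho_n)}-\bE A^{(n,\rho_n)}\|\le2\sqrt{\rho_n}\,(1+o(1))$ valid when $\rho_n\gg\log n$, Weyl's inequality against the essentially rank--one mean matrix $\bE A^{(n,\rho_n)}=\tfrac{\rho_n}{n}(J-I)$ ($J$ the all--ones matrix) yields
\[
\mathrm{spec}(S)\subseteq[-2-\delta_n,\,2+\delta_n]\cup\{\theta_n\},\qquad \delta_n=o(1),\quad \theta_n=\frac{\lambda_{\max}(A^{(n,\rho_n)})}{\sqrt{\rho_n}}=\sqrt{\rho_n}\,(1+o(1));
\]
and from the degree concentration \cite{K-20}, $\|\Delta(v)\|\le\tfrac{|v|}{\rho_n}\max_i\bigl|B^{(n,\rho_n)}_{ii}-\rho_n\bigr|\le C\,|v|\sqrt{\log n/\rho_n}=C\,|v|/\sqrt{\chi_n}$. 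One needs the rate in \cite{LMZ} sharp enough to give $\delta_n\le\vep_n^2/8$ eventually; this --- together with $\vep_n^2\gg\rho_n^{-1}$ and $\vep_n^2\gg1/\sqrt{\chi_n}$ --- is what pins down the exponent $1/8$ of \eqref{(2.9)}.

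The geometric core, which replaces the quadratic--discriminant computation behind Lemma \ref{L:ST}, is the elementary estimate: for $c\in(0,1]$ and $z_c(v)=cv+1/v$,
\[
\mathrm{dist}\bigl(z_c(v),[-2,2]\bigr)\;\ge\;\frac{(|v|-1)^2}{|v|}\;-\;(1-c)\,|v|\qquad(|v|\neq1).
\]
Indeed the Joukowski map $z_1(v)=v+1/v$ sends the circle $|v|=R$ to the ellipse with semiaxes $R+1/R$, $|R-1/R|$, whose distance to $[-2,2]$ is attained at the real vertex and equals $(R-1)^2/R$, while $|z_c(v)-z_1(v)|=(1-c)|v|$; here $c=1-\rho_n^{-1}$, so the correction is $|v|/\rho_n$. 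Geometrically this says the two preimages of a bulk point under the near--Joukowski map $z(v)$ have product of moduli $\approx1$, so neither can lie in an annulus that stays a fixed distance $\vep_n$ from $|v|=1$ --- the non--regular, random--graph version of the implication \eqref{(1.6)}.

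Assembling, fix $v\in D^{(1)}_{\vep_n,\vep'_n}\cup D^{(2)}_{\vep_n,\vep'_n}$. The lemma gives $\tfrac{(|v|-1)^2}{|v|}\ge\max\bigl(\tfrac12\vep_n^2,\,|v|-2\bigr)$, hence $\mathrm{dist}(z(v),[-2-\delta_n,2+\delta_n])\ge\max\bigl(\tfrac12\vep_n^2,|v|-2\bigr)-|v|/\rho_n-\delta_n$; and since $\vep'_n=(\sqrt{\rho_n}(1-\kappa))^{-1}$ forces $|v|<\sqrt{\rho_n}(1-\kappa)$, hence $|z(v)|<\sqrt{\rho_n}(1-\kappa)+1<\theta_n$ on $\Omega_n^{\ast}$, one also has $|z(v)-\theta_n|\ge\tfrac14\kappa\sqrt{\rho_n}$ --- the step that throws away the two poles produced by the Perron eigenvalue (those near $|v|\approx\sqrt{\rho_n}$ and $|v|\approx1/\sqrt{\rho_n}$), exactly paralleling the omission of the eigenvalue $q+1$ in \eqref{(1.6)}. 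Thus
\[
s_{\min}(G(v))\;\ge\;\min\bigl(\max(\tfrac12\vep_n^2,|v|-2)-|v|/\rho_n-\delta_n,\ \tfrac14\kappa\sqrt{\rho_n}\bigr)-C|v|/\sqrt{\chi_n},
\]
and splitting into $|v|\le K$ and $|v|>K$ for a large absolute constant $K$ --- needed because $\|\Delta(v)\|$ is only $O(\sqrt{\log n})$, not $o(1)$, when $|v|\approx\sqrt{\rho_n}$ --- a short computation using $\vep_n=2\chi_n^{-1/8}$ shows this is strictly positive for all such $v$ once $n$ is large (for $|v|\le K$ the term $\tfrac12\vep_n^2$ dominates everything, while for $|v|>K$ one of $|v|-2\gtrsim|v|$ or $\tfrac14\kappa\sqrt{\rho_n}$ dominates $C|v|/\sqrt{\chi_n}$). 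Hence $G(v)$ is invertible on both annuli for every $\omega\in\Omega_n^{\ast}$, so $\Phi^{(i)}_{n,\rho_n}(\vep_n,\vep'_n)\subseteq(\Omega_n^{\ast})^{c}$ and \eqref{(2.10)} follows. The step I expect to be the real obstacle is this uniform lower bound on $\mathrm{dist}(z(v),\mathrm{spec}(S))$ over the large--modulus part of $D^{(2)}_{\vep_n,\vep'_n}$, where $\Delta(v)$ is no longer a small perturbation and one must rely on the gap to $\theta_n$ rather than to the bulk, together with the arithmetic that keeps $\vep_n$, $\vep'_n$, $\delta_n$ and the degree--fluctuation rate mutually compatible.
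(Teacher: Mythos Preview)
Your plan is correct and follows essentially the same route as the paper: reduce the vanishing of $\det H(v)$ to $s_{\min}(H(v))=0$, apply Weyl's inequality for singular values to peel off the diagonal perturbation $v\hat B$, identify $s_{\min}(\tilde A-\gamma(v)I)$ with $\mathrm{dist}(\gamma(v),\mathrm{spec}(\tilde A))$, and then separate the top eigenvalue $\lambda_1(\tilde A)\sim\sqrt{\rho_n}$ from the bulk $[-2-\delta_n,2+\delta_n]$, feeding in the concentration results of \cite{K-20}, \cite{BGBK} and \cite{LMZ} exactly as you indicate.

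The only substantive differences are presentational. Your Joukowski lemma $\mathrm{dist}(z_1(v),[-2,2])\ge(|v|-1)^2/|v|$ packages in one line what the paper does through an explicit ellipse-distance analysis (their Lemmas 4.1--4.3, computing the functions $F^{(1)},F^{(2)},G^{(1)},G^{(2)}$); your version is cleaner, theirs gives exact constants. Likewise, your split $|v|\le K$ versus $|v|>K$ replaces the paper's explicit minimisation of $f(r)=|\gamma(r)-q|^2/r$ over the annulus, which pins down the boundary value $F^{(1)}(2+\delta,\vep,\vep',\tau)=\vep^4(1+o(1))$; both arrive at the same requirement $\vep_n^2\gg\chi_n^{-1/2}$ that fixes the exponent $1/8$. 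Finally, the paper centres $B$ at $(n-1)\rho_n/n$ (so $\gamma(v)=v+1/v-v(1/n+1/\rho_n)$) rather than at $\rho_n$ as you do; the discrepancy is $O(|v|/n)$ and is absorbed either way. The ``real obstacle'' you flag --- the large-$|v|$ regime where $\|\Delta(v)\|$ is not small --- is handled identically in both arguments by the gap $|z(v)-\theta_n|\gtrsim\kappa\sqrt{\rho_n}$ coming from the choice $\vep'_n=(\sqrt{\rho_n}(1-\kappa))^{-1}$.
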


We prove Theorem \ref{T1}  in Section \ref{S3}  below. Let us note that we can prove 
slightly more powerful statement by adding to the complex domains 
$D^{(i)}_{\vep, \vep'}$ (\ref{(2.7)})  real intervals 
$$
I^{(1)}_{\hat \vep, \vep'}= \{v \in \bR: \vep'<v<1-\hat \vep\}
\quad {\hbox{and}}\quad 
I^{(2)}_{\hat \vep, \vep'}= \{v \in \bR: 1+\hat \vep<v<1/ \vep'\},
 $$
where $\vep'= \vep'_n$ is given by (\ref{(2.9)}) and 
\begin{equation}\label{(2.11)}
\hat \vep_n = {\hat h\over (\chi_n)^{1/4}}
\end{equation}
 with sufficiently large $\hat h>0$.  We concentrate ourself on the main case of complex domains, the case of real $v$ is briefly discussed at the end of Section \ref{S3}.
 
\v
Let us formulate a corollary of Theorem \ref{T1}  that characterizes the points of the complex plane in the form close to \eqref{(1.10)}. 
 \v 
\begin{corollary} \label{C:2.1} For any given 
$$
v\in D^{(1)}\cup D^{(2)}= \{ z: z \in \bC, 0< | z|<1\} \cup \{ z: z\in \bC, 1< | z|\},
$$
the following series of probabilities converges,
\begin{equation}\label{(2.12)}
\sum_{n\ge 1}P_{n,\rho_n}
\left( \left\{ \omega\in \Omega_{n,\rho_n}:  
 \left( \tilde Z^{(n,\rho_n)}(v, \omega)
 \right)^{-1} = 0 \right\} \right)< \infty,
\end{equation}
 where $\rho_n = \chi_n \log n$, $\chi_n \to \infty$ as $n\to\infty$.
\end{corollary}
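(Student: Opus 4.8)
The plan is to deduce the corollary directly from Theorem~\ref{T1} by an elementary exhaustion argument: for a \emph{fixed} $v$ with $0<|v|$ and $|v|\neq 1$, one checks that $v$ belongs to one of the two truncated domains $D^{(i)}_{\vep_n,\vep'_n}$ for all sufficiently large $n$, so that the event in \eqref{(2.12)} is contained in the corresponding $\Phi^{(i)}_{n,\rho_n}(\vep_n,\vep'_n)$ and its probability is controlled by \eqref{(2.10)}. The only input needed is that both truncation parameters of \eqref{(2.9)} vanish in the limit: under \eqref{(2.5)} we have $\chi_n\to\infty$, hence
$$
\vep_n = {2\over (\chi_n)^{1/8}} \ \longrightarrow\ 0,
\qquad
\vep'_n = {1\over \sqrt{\rho_n}\,(1-\k)} \ \longrightarrow\ 0,
$$
the latter because $\rho_n=\chi_n\log n\ge \chi_n\log 2\to\infty$.

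First I would treat the case $v\in D^{(1)}$, i.e.\ $0<|v|<1$. Since $\vep'_n\to 0$ and $\vep_n\to 0$, there is an index $n_0=n_0(v)$ such that $\vep'_n<|v|$ and $\vep_n<1-|v|$ for all $n\ge n_0$; for such $n$ the point $v$ lies in $D^{(1)}_{\vep_n,\vep'_n}$, whence, by the very definition \eqref{(2.8)},
$$
\left\{\omega\in\Omega_{n,\rho_n}:\bigl(\tilde Z^{(n,\rho_n)}(v,\omega)\bigr)^{-1}=0\right\}
\ \subseteq\ \Phi^{(1)}_{n,\rho_n}(\vep_n,\vep'_n).
$$
The case $v\in D^{(2)}$, i.e.\ $|v|>1$, is completely analogous: for all large $n$ one has $\vep_n<|v|-1$ and $1/\vep'_n=\sqrt{\rho_n}\,(1-\k)>|v|$, so $v\in D^{(2)}_{\vep_n,\vep'_n}$ eventually and the event above is contained in $\Phi^{(2)}_{n,\rho_n}(\vep_n,\vep'_n)$. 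In either case the $n$-th term of the series \eqref{(2.12)} is, for all $n\ge n_0$, at most $P_{n,\rho_n}\bigl(\Phi^{(i)}_{n,\rho_n}(\vep_n,\vep'_n)\bigr)$ with the appropriate $i\in\{1,2\}$; the sum of these terms converges by \eqref{(2.10)}, and the finitely many omitted initial terms are each bounded by $1$, so the full series in \eqref{(2.12)} converges.

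There is essentially no obstacle here: the analytic content is entirely carried by Theorem~\ref{T1}, and the corollary is merely the remark that the families $\{D^{(1)}_{\vep_n,\vep'_n}\}_n$ and $\{D^{(2)}_{\vep_n,\vep'_n}\}_n$ increase, as $n\to\infty$, to $\{z:0<|z|<1\}$ and $\{z:|z|>1\}$ respectively. The single point deserving a word of care is that $\vep'_n\to 0$ (so that the inner radius shrinks to $0$ and the outer radius $1/\vep'_n$ blows up), which is guaranteed by $\rho_n\to\infty$, itself a consequence of $\chi_n\to\infty$ in \eqref{(2.5)}.
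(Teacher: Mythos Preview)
Your argument is correct and matches the paper's own treatment: the paper simply states that Corollary~\ref{C:2.1} ``follows immediately from the proof of Theorem~\ref{T1}'', and your exhaustion argument (that $\vep_n,\vep'_n\to 0$ so a fixed $v$ with $|v|\neq 1$ eventually lies in the appropriate $D^{(i)}_{\vep_n,\vep'_n}$, whence the event in \eqref{(2.12)} is contained in $\Phi^{(i)}_{n,\rho_n}(\vep_n,\vep'_n)$) is precisely how one makes this immediate deduction explicit.
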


The proof of this statement follows immediately from the proof of Theorem \ref{T1}.

\V 
Let us note that the statement of Corollary \ref{C:2.1}  can be formulated 
 in its equivalent form,
 \begin{equation}\label{(2.13)}
 \sum_{n\ge 1} P_{n, \rho_n}
 \left( 
 \left\{ 
 \omega: 
 \left( Z_{\G^{(n,\rho_n)}}\left({v\over \sqrt \rho_n}\right)\right)^{-1}
  = 0 \right\} \right)=+\infty 
 \quad {\hbox{implies}} \quad |v|=1. 
\end{equation}
This shows that Corollary \ref{C:2.1}  can be regarded as a direct  analog of the statement \eqref{(1.8)} in the case of large random graphs.

Let us outline the proof of Theorem \ref{T1}. Definition  
\eqref{(2.6)} shows that 
if $v^2\neq \rho$, then the function $Z_{\G^{(n,\rho)}}$ has a pole at $v$
if and only if 
\begin{equation}\label{(2.14)}
v\left(1-{1\over \rho}\right) + v  -   \lambda_j
\left({1\over \sqrt \rho} A + v\left( {1\over \rho} B - I\right)\right) =0
\end{equation}
for some $j$, where $\lambda_j(M)$ denotes the $j$-th eigenvalue of $M$. 
If one accepts that the expression in last braces  asymptotically vanishes
\begin{equation}\label{(2.15)}
\Vert  {1\over \rho} B - I\Vert = o(1),
\end{equation}
then it can  be regarded as a 
 small perturbation of the eigenvalues 
 of $\tilde A = A/\sqrt \rho$. 
 Neglecting the corresponding term in the right-hand side of 
 \eqref{(2.14)} as well as the \mbox{vanishing} term $-1/\rho$ in the first braces of \eqref{(2.14)}, one could say 
 that 
 \eqref{(2.14)}
 is \mbox{equivalent} to the condition that 
 \begin{equation}\label{(2.16)}
 {1\over v} + v - \lambda_j(\tilde A)= 0 
 \end{equation}
 for some $j\in \{1, \dots, n\}.$
It is known since  \cite{FK,KS} that the normalized adjacency matrices 
of the Erd\H os-R\'enyi random graphs $G(n,\rho/n)$
have all  eigenvalues,  excepting the maximal one 
 $\lambda_1(\tilde A)$, 
asymptotically bounded in absolute value by $2+ \de_n$
in the limit $n\to\infty$, $\rho\gg \log n$, where $\de_n$ tends to zero.
 Therefore  the probability of the event  \eqref{(2.16)} 
 rapidly decays  for  all $j\in \{2, \dots, n\} $ as $n\to\infty$ for any $v$ verifying  
 $| v | \neq 1$.  It is worthy note that important part of 
 this proposition 
 is that it can be proved uniformly with respect to complex $v$ belonging to 
 growing domains $D^{(1)}$ and $D^{(2)}$ \eqref{(2.7)}. 
 To do this, we 
 use an important  property of the ellipsoidal curves of the form
 \begin{equation}\label{(2.17)}
 \CE_r= \left\{w(z)=   z+ {1\over z}, \ z = r e^{i \vp}, \ 0\le \vp< 2\pi
 \right\}
 \end{equation}
 and the distance between the points of real axis and $\CE_r$. 
 
 To establish convergence of the  series \eqref{(2.10)},
we use recent results on the concentration properties of eigenvalues of adjacency matrices of random graphs \cite{BGBK,LMZ}.
Relation \eqref{(2.15)}  reflects the concentration property of  diagonal elements of $B-I$
\begin{equation}\label{(2.18)}
\max_{ 1\le i\le n} \left| {1\over \rho} \left( B^{(n,\rho)}\right)_{ii} - 1\right|= o(1), \quad n, \rho\to\infty
\end{equation}
that is also  well known in the literature 
  (for  example, see monograph \cite{B} and paper \cite{K-20}). 
Convergence \eqref{(2.18)} can be interpreted as asymptotic regularity
  of the Erd\H os-R\'enyi random graphs when the average vertex degree
  goes to infinity. 
  Taking into account this observation, one 
  can say that the proof of Theorem \ref{T1}  is equivalent in certain sense to 
  the 
  proof of  a stochastic version of  Lemma \ref{L:ST} by Stark and Terras.

\section{Proof of Theorem \ref{T1}}\label{S3}

Let us rewrite definition \eqref{(2.7)} in the form 
\begin{equation}\label{(3.1)}
H^{(n,\rho_n)}(v) = \tilde A^{(n,\rho_n)}   - \g^{(n,\rho_n)}(v) I 
- v \hat B^{(n,\rho_n)},
\end{equation}
where 
$
\hat B  = 
\tilde B  -  I (n-1)/ n
$,
$$
\left(\tilde B^{(n,\rho)}\right)_{ij} = 
{1\over \rho} \left(B^{(n,\rho)}\right)_{ij} 
$$
and 
$$
\left(\tilde A^{(n,\rho)}\right)_{ij} = {1\over \sqrt \rho} 
\, a^{(n,\rho)}_{ij}, \quad i,j \in \{1, \dots, n\}.
$$
We have also denoted 
$$
\g^{(n,\rho_n)}(v) = {1- v^2/\rho\over  v} + {v(n-1)\over n} = 
v+{1\over v} -v \left( {1\over n} + {1\over \rho_n}\right). 
$$
The poles of $\tilde Z^{(n,\rho)}(v, \omega)$ \eqref{(2.6)}
with $v^2\neq \rho$ correspond to zeroes of the determinant of 
$H^{(n,\rho)}$. 
Introducing the subset 
\begin{equation}\label{(3.2)}
\Phi_{n,\rho} (v) = \{ \omega: \ \det(H^{(n,\rho)}(v)) = 0\},
\end{equation}
we can write that subsets $\Phi^{(i)}$ of \eqref{(2.8)} are  given by relations
$$
\Phi^{(i)}_{n,\rho}(\vep_n,\vep'_n) = \cup_{v \in D^{(i)}_{\vep_n, \vep'_n} }
\Phi_{n,\rho} (v),\quad i=1,2.
$$
It should be noted  that $\Phi^{(1)}$ and $\Phi^{(2)}$  are  determined as  uncountable unions of measurable events; since the probability space  
$\Omega_{n, \rho}$ 
generated by \eqref{(2.1)}  can be viewed  as a discrete set, we conclude that $\Phi^{(i)}$ are both measurable.
In what follows, we replace denotations of events 
$\{ \omega: \Upsilon (\omega)\} $  simply by $\{\Upsilon \}$. We will omit the superscripts
$n$ and $\rho$ in $H^{(n,\rho)}$ and everywhere below, when no confusion can arise.  We  prove relation \eqref{(2.10)} with $i=2$  in the full extent. 
The proof of (\ref{(2.10)}) in the case of  $i=1$ is given in less details.

\subsection{Proof of Theorem \ref{T1} in the case of $i=2$.}

The following elementary statement shows that the study of 
$\det(H(v))$ \eqref{(3.1)} can be reduced to the 
study of the product of singular values of $H(v)$. We denote 
these singular values by 
\begin{equation}\label{(3.3)}
\s_1(H(v))\le \s_2(H(v))\le \dots \le \s_n(H(v)).
\end{equation} 
Here and below we omit the superscripts $n$ and $\rho_n$ everywhere when no confusion can arise.

\v 
\n
 \begin{lemma}\label{L:3.1} For any $v\in \bC$, the equivalence
\begin{equation}\label{(3.4)}
\det(H(v))\neq 0 \iff \prod_{k=1}^n \s_k(H(v))\neq 0
\end{equation}
is true. 
\end{lemma}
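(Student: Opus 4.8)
This is a purely linear-algebraic reformulation, and the plan is to read it off from the singular value decomposition. First I would write $H(v)= U\Sigma V^{*}$ with $U,V\in\bC^{\,n\times n}$ unitary and $\Sigma=\mathrm{diag}\big(\s_1(H(v)),\dots,\s_n(H(v))\big)$. By multiplicativity of the determinant,
\begin{equation*}
\det\big(H(v)\big)=\det(U)\,\det(V^{*})\,\prod_{k=1}^{n}\s_k\big(H(v)\big),
\end{equation*}
and since $U,V$ are unitary one has $|\det(U)|=|\det(V^{*})|=1$, whence $\big|\det(H(v))\big|=\prod_{k=1}^{n}\s_k(H(v))$. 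A complex number is nonzero if and only if its modulus is nonzero, and a finite product of non-negative real numbers is nonzero if and only if every factor is nonzero; chaining these two trivialities gives \eqref{(3.4)}.

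Equivalently — and this is the way I would actually phrase it in the paper, to avoid even invoking the SVD — one notes that $\det(H(v))\neq 0$ exactly when $H(v)$ is invertible, i.e.\ has trivial kernel, which by definition of the smallest singular value is exactly the statement $\s_1(H(v))\neq 0$; since the singular values are ordered as in \eqref{(3.3)}, $\s_1(H(v))\neq 0$ is in turn equivalent to $\prod_{k=1}^{n}\s_k(H(v))\neq 0$. There is no genuine obstacle in this lemma: its sole purpose is to let the rest of the argument replace the complex-valued quantity $\det(H(v))$, whose modulus is awkward to control directly, by the product of singular values, to which the Weyl-type perturbation bounds announced in the introduction can be applied after writing $H(v)=\tilde A-\g(v)I-v\hat B$ as in \eqref{(3.1)}.
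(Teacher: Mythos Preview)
Your proposal is correct; both of your arguments are valid and either would suffice. The paper takes a slightly different, more structure-specific route: rather than invoking the SVD or the general fact that invertibility is equivalent to $\s_1\neq 0$, it uses that $\tilde A$ is real symmetric and $\hat B$ real diagonal to observe $H^{*}(v)=H(\bar v)=\overline{H(v)}$, and then chains
\[
\det(H(v))\neq 0 \iff \det(H(\bar v))\neq 0 \iff \det\big(H^{*}(v)H(v)\big)\neq 0,
\]
the last being the statement about the product of singular values. Your argument is more general (it works for an arbitrary complex matrix and does not rely on the special form \eqref{(3.1)}) and arguably cleaner; the paper's route additionally records the identity $H^{*}(v)=H(\bar v)$, which is a pleasant structural observation but not actually needed for the lemma. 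Both the paper and your second argument finish by noting that, thanks to the ordering \eqref{(3.3)}, the product condition reduces to $\s_1(H(v))\neq 0$.
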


\begin{proof}  Since $\tilde A $ is  a real symmetric matrix 
and $\hat B$ is a real diagonal one, then we can write for hermitian conjugate that 
$$
\big(H(v)\big)^*_{ij} = \overline{\big( H(v)\big)_{ji}}=
(1- \de_{ji} )\tilde A_{ji} - \bar v \de_{ji}  \hat B_{ii}  - \overline{\gamma(v)} = \big(H(\bar v)\big)_{ij}
$$
and therefore 
$$
H^*(v) = H(\bar v) \quad \hbox{and} \quad \overline{H(v)} = H(\bar v).
$$
It is easy to see that  $\l(v) $ is an eigenvalue of $H(v)$ if and only if 
 $\overline{\l(v)}$ is the  eigenvalue of $H(\bar v)$.
Then 
$$
\det(H(v))\neq 0 \iff\det (H(\bar v))\neq 0 \iff \det(H^*(v) H(v))\neq 0.
$$
The last statement is equal to that of the  right-hand side of  \eqref{(3.4)}. 
Let us note  that the last condition of \eqref{(3.4)} is equivalent to 
$\s_1(H(v))\neq 0$ because of \eqref{(3.3)} 
and due to positivity of $\sigma_i(H(v))$, $1\le i\le n$. 
Lemma \ref{L:3.1} is proved. 
\end{proof}

\v 
The study of singular values of $H(v)$ \eqref{(3.3)}
can be reduced to the study of singular values of $\tilde A- \g(v)I$ 
\eqref{(3.1)}  due to the concentration property \eqref{(2.15)} of the diagonal matrix 
$\hat B$.
Using the Weyl's inequality for singular values of $n$-dimensional matrices $X$ and $Y$  (see \cite{Cha} 
  and \cite{T}, Exercice 22),
\begin{equation}\label{(3.5)}
\vert \s_i(X+Y) - \s_i(X)\vert \le \Vert Y\Vert, \quad i=1, \dots, n,
\end{equation}
where $\Vert Y\Vert$ is the operator norm of $Y$,
we obtain that 
\begin{equation}\label{(3.6)}
\vert \s_i(H^{(n,\rho)}(v)) - \s_i(\tilde A^{(n,\rho)} - \g^{(n,\rho)}(v)I)\vert \le  \Vert v 
\hat B\Vert = \vert v \vert 
\max_{i=1, \dots, n} 
\vert \hat \D_i^{(n,\rho)}\vert , \quad i=1, \dots, n,
\end{equation}
where 
$$
\hD^{(n,\rho)}_i = 
  \tilde b^{(n,\rho)}_{ii} - {n-1\over n}= {1\over \rho} 
  \sum_{j\in \{1, \dots, n\}, \ j\neq i}\  \left( a^{(n,\rho)}_{ij} - {\rho\over n}\right).
$$
We denote 
\begin{equation}\label{(3.7)}
\hD_{\max}^{(n,\rho)} = \max_{i=1, \dots, n} \vert \hD_i^{(n,\rho)}\vert .
\end{equation}

We have seen above that  $\Phi(v)=\{ \s_1(H(v))=0\}$ and that 
\begin{equation}\label{(3.8)}
\{ \s_1(H(v))=0\}\subseteq 
\left
\{ \s_1(\tilde A - \gamma^{(n,\rho)}(v) I)\le | v| 
\hat \Delta_{\max}^{(n,\rho)}\right\}.
\end{equation}
Elementary calculation shows that
\begin{align}
\s_i(\tilde A - \gamma^{(n,\rho)}(v) I)    
    & = \l_i \big((\tilde A - \gamma^{(n,\rho)}(v) I)(\tilde A - \gamma^{(n,\rho)}(v) I)^*\big)
         \nonumber\\
    &= \l_i \big((\tilde A - \a(v) I)^2\big) + \b(v)^2,
\label{(3.9)}
\end{align}
where $\a(v)$ and $\b(v)$ are the real and imaginary parts of $\g(v)$, respectively. Diagonalizing  $\tilde A - \a(v) I$, we deduce 
from \eqref{(3.9)} that
\begin{equation}\label{(3.10)}
 \s_i(\tilde A - \gamma^{(n,\rho)}(v) I)= \big(\l_i(\tilde A) - \a(v)\big)^2 + \b(v)^2=
 |\l_i(\tilde A) - \g(v)|^2.
 \end{equation}

It follows from \eqref{(3.8)} and \eqref{(3.10)} 
that for any $v\in D^{(2)}_{\vep, \vep'}$, 
\begin{align}
\Phi_{n,\rho}(v)
&\subseteq 
\left\{ \min_{i=1, \dots, n} {|\l_i(\tilde A) - \g^{(n,\rho)}(v)|^2 \over |v| }
\le  \hat \Delta_{\max}^{(n,\rho)}\right\}
\nonumber\\
&\subseteq \left\{ \inf_{v \in D^{(2)}_{\vep, \vep'} }\ 
 \min_{i=1, \dots, n} {|\l_i(\tilde A) - \g^{(n,\rho)}(v)|^2 \over |v| }
\le  \hat \Delta_{\max}^{(n,\rho)}\right\}
= R^{(n,\rho)}(\vep, \vep'). 
\label{(3.11)}
\end{align}
 Then 
 \begin{equation}\label{(3.12)}
 \Phi^{(2)}_{n,\rho}(\vep, \vep') = \cup_{v \in D^{(2)}_{\vep, \vep'}} \{ \s_1(H(v))=0\}\subseteq
 R^{(n,\rho)}(\vep, \vep'),
 \end{equation}
 where we omitted $\vep$ and $\vep'$ in $R$. 
Let us note  that  
 the minimums of \eqref{(3.11)} 
 can be taken  in arbitrary order, in particular such that 
 $$
\inf_{v\in D^{(2)}_{\vep, \vep'}} = \inf _{1+\vep < r < 1/\vep'} \ \ \inf_{0\le \vp< 2\pi}.
$$
We denote these minimums by 
$
M_r^{(2)}$ and $ M_\vp
$
respectively.

In  \eqref{(3.11)}, the term  $|\l_i(\tilde A) - \g^{(n,\rho)}(v)|^2$ 
is a squared  distance between
real $\l_i(\tilde A)$ and the complex number
\begin{equation}\label{(3.13)}
\gamma^{(n,\rho)} (v) = v(1-\tau)+ {1\over v}, \quad \tau = 
{1\over n} + {1\over \rho}\, .
\end{equation}
Let us note that the application $\g^{(n,\rho)}(\cdot): \bC \to \bC$
 can be viewed as  a slightly modified version of the Zhukovsky transform
 $w(z)$ \eqref{(2.17)}.
For given $r$, the  points
$$
\CE(\dot a, \dot b)= \left\{\gamma^{(n,\rho)}(r e^{i\vp}), \quad 0\le \vp < 2\pi\right\}
$$
form an ellipsoid in the complex plane $\bR^2= \{(x,y): x= 
\Re w, \ y = \Im w\}$. 
An important property of the minimal distance from an ellipsoid to a given real point $\l$  
 is studied in Section \ref{S4}.

We  write that 
\begin{equation}\label{(3.14)}
R^{(n,\rho)}(\vep, \vep') \subseteq 
R^{(n,\rho)}_1 \cup R^{(n,\rho)}_2,
\end{equation}
where

$$
R^{(n,\rho)}_1= \left\{ M^{(2)}_{r} \, M_\vp \ 
 {|\l_1(\tilde A) - \g(re^{i\vp})|^2 \over r }
\le  \hat \Delta_{\max}^{(n,\rho)}\right\}
$$
and
$$
R^{(n,\rho)}_2= \left\{ M^{(2)}_{r}\,  M_{\vp} \ \min_{i=2, \dots, n}\
 {|\l_i(\tilde A) - \g(re^{i\vp})|^2 \over r }
\le  \hat \Delta_{\max}^{(n,\rho)}\right\}.
$$

 Let us study first the term $R^{(n,\rho)}_2$. To do this, we consider 
 the matrix 
\begin{equation}\label{(3.15)}
\breve A^{(n,\rho)} = {1- \delta_{ij}\over \sqrt{\rho} } \left(a^{(n,\rho)}_{ij} - {\rho\over n}\right).
\end{equation}
It is known \cite{FK} that  
\begin{equation}\label{(3.16)}
\l_1 (\bA) \ge \l_2(\tA) \ge \cdots \ge \l_n (\tA)\ge \l_n(\bA).
\end{equation}
For completeness, we reproduce in Section \ref{S4} the proof of \eqref{(3.16)}  by 
 F\"uredi and Koml\'os \cite{FK}.
It follows from \eqref{(3.16)} that  
\begin{equation}\label{(3.16')}
\min_{i=2, \dots, n}\
 {|\l_i(\tilde A) - \g(re^{i\vp})|^2 \over r }
\ge \min_{i=1, \dots, n}\
 {|\l_i(\breve A) - \g(re^{i\vp})|^2 \over r } .
 \nonumber 
\end{equation}
\v

Let us  
 introduce the set
 $$
 \U_\de = \{\omega: \l_{\max}
 (\breve A)\le 2+\de\}, \quad \de>0.
 $$
 Then we can write an obvious inclusion
 \begin{equation}\label{(3.17)}
R^{(n,\rho)}_2  \subseteq (R^{(n,\rho)}_2\cap \U_\de) \cup \bar \U_\de.
\end{equation}
We choose   $\delta$ and $\vep$   such that the point $2+\de$
lies inside the minimal ellipsoid,  
$$
2+\delta < \min_{1+\vep < r} \g(r),-
 $$ (see relation \eqref{(4.9)} of Section \ref{S4} for more details). In Section \ref{S4} we  show that (see Lemma \ref{L:4.2})
$$
M_\vp  {|\l_i(\breve A) - \g(re^{i\vp})|^2  } I_{\U_\de}\ge 
M_\vp  {|\l_{\max}(\breve A) - \g(re^{i\vp})|^2  } I_{\U_{\de}},
  $$
  where $I_\upsilon $ is  the indicator function of $\Upsilon$. 
It is also proved in Lemma \ref{L:4.2}    
that the minimal  distance between
$\lambda_{\max}(\breve  A)$ and  ellipsoid 
$\CE(\dot a, \dot b)$  is attained at the right extremum of 
$\CE(\dot a , \dot b)$ given by $\dot a = \g(r)$ (see \eqref{(4.7)}),
$$
M_\vp{|\l_{\max}(\breve A) - \g(re^{i\vp})|^2  }I_{\U_\de}
= {|\l_{\max}(\breve A) - \g(r)|^2  }I_{\U_\de}
\ge | (2+\de) - \g(r)|^2.
$$  
 Finally,  the minimal value 
 of  $| (2+\de) - \g(r)|^2/r$
 with respect to $r\in (1+\vep, 1/\vep')$  
 is given by the value
 $$
 \min_{1+\vep< r< 1/\vep'} { | (2+\de) - \g(r)|^2\over r}= 
 F^{(1)}(2+\de, \vep, \vep',\tau)= { (\vep^2 - \de(1+\vep) - \tau(1+\vep)^2)^2\over (1+\vep)^3}, 
 $$
 see Lemma \ref{L:4.2}. 
 Taking into account obvious inclusion, 
$$
R^{(n,\rho)}_2 \cap \U_\de \subseteq 
\left\{M_r^{(2)} \, M_\vp {|\l_{\max}(\breve A) - \g(re^{i\vp})|^2 \over r }I_{\U_\de} \le  \hat \Delta_{\max}^{(n,\rho)}
\right\},
$$
we can write that 
\begin{equation}\label{(3.18)}
P\left(
R^{(n,\rho)}_2 \cap \U_\de \right) \le P\left(\hat \Delta_{\max}^{(n,\rho)}
\ge F^{(1)}(2+\de, \vep, \vep', \tau)\right).
\end{equation}

It is proved in 
\cite{K-20} that if $\rho_n = \chi_n  \log n$ with 
$\chi_n\to \infty$ as $n\to\infty$, then for any positif $\nu$ the following upper bound holds,
\begin{equation}\label{(3.19)}
P\left( \hat  \Delta_{\max}^{(n,\rho_n)}> \nu \right)\le 
{1 \over 
n^{ \log(\nu \sqrt{ \chi_n}(1+o(1)))}}\ , \quad n\to \infty. 
\end{equation}
We see that in the limit \eqref{(2.5)} when $\chi_n$ infinitely increases, we can 
consider \eqref{(3.19)} with vanishing $\nu_n\to 0$ such that 
$
\nu_n \ge {3/ \sqrt{\chi_n}} .
$
 This allows us to choose $\vep$
in the right-hand side of \eqref{(3.18)}  such that $\vep\to 0$ as $n\to\infty$.
In this case   
 $F^{(1)}(2+\de,\vep,\vep', \tau)= \vep^4(1+o(1))$ 
 for vanishing $\de$ such that $\de = o(\vep^2)$ 
 (see relation \eqref{(4.13)} of Section \ref{S4}). 
 Then the choice of $\vep_n= 2 \chi_n^{-1/8}$ \eqref{(2.9)} is sufficient to conclude that 
\begin{equation}\label{(3.20)}
\sum_{n=1}^\infty P(R_2^{(n,\rho_n)}\cap \U_{\de_n}) < \infty,
\quad \de_n = o(\vep_n^2), 
\end{equation}
where $\rho_n$ satisfies conditions \eqref{(2.5)}. 
\v

Let us show that 
\begin{equation}\label{(3.21)}
\sum_{n\ge 1} P(\bar \U_{\de_n}) <\infty \quad {\hbox{with}} \quad \de_n = {1\over \chi_n^{3/8}}\ .
\end{equation}
It is easy to see that starting from some $n_0$,
random matrix $\bA^{(n,\rho_n)}$ \eqref{(3.15)} satisfies conditions of Theorem 2.7 of \cite{BGBK}.  According to this theorem,  there exists a constant $C>0$ such that 
\begin{equation}\label{(3.22)}
\bE \Vert \bA^{(n,\rho)}\Vert < 2+ {C\over \sqrt {\chi_n}},  
 \quad 
(\rho,\chi)_n\to \infty.
\end{equation}
 Adding to this result the general concentration inequality
\begin{equation}\label{(3.23)}
 P\left(\left\vert \Vert \bA^{(n,\rho)} \Vert -  \bE  \Vert \bA^{(n,\rho)} \Vert \right\vert \ge   t\right) \le
{2\over n^{ c \chi_n t^2}}, \quad t>0
  \end{equation}
  with some $c>0$  \cite{BLugM}, 
  we get the following asymptotic upper bound
 \begin{equation}\label{(3.24)}
   P\left( \Vert \bA \Vert \ge  2+ \de\right) 
   \le 
 P\left(\left\vert \Vert \bA \Vert -  \bE  \Vert \bA \Vert \right\vert 
 \ge \de - {C\over \sqrt{\chi_n}} \right)
 \le {2\over n^{ c \chi_n (\de/2)^2}}.
 \end{equation}
 It follows from \eqref{(3.22)} that 
$$
\bar \U_\de = \{ \l_{\max}(\breve A) > 2+\de\}
\subseteq \{ \l_{\max}(\breve A) - \bE \l_{\max}(\breve A) > 
\de -  C/\sqrt {\chi_n}\}.
$$
Then, using \eqref{(3.23)}, we get 
\begin{equation}\label{(3.26)}
P(\bar \U_\de) \le \left( | \l_{\max}(\breve A) - \bE \l_{\max}(\breve A)|
> \de -C/\sqrt {\chi_n} \right)
\le {2 n^{2\de c C \sqrt {\chi_n}}\over n^{c\chi_n \de^2 } \cdot n ^{cC^2}}.
\end{equation}
Relation \eqref{(3.26)} shows that  \eqref{(3.21)} is true, as well as \eqref{(3.20)}.
Then it follows from \eqref{(3.17)} that 
\begin{equation}\label{(3.27)}
\sum_{n\ge 1} P\left( R^{(n,\rho_n)}_2\right) < \infty.
\end{equation}

\v

Let us study the subset $R^{(n,\rho)}_1$  
\eqref{(3.14)}. 
We denote
\begin{equation}\label{(3.27)}
\Psi_\k = \left\{ \l_1(\tilde A) \ge \sqrt{\rho} (1-\k)\right\}
\end{equation}
and observe that 
\begin{equation}\label{(3.28)}
R_1^{(n,\rho)} \subseteq (R_1^{(n,\rho)} \cap \Psi_\k) \cup \bar \Psi_\k.
\end{equation}
Regarding the subset  $R_1^{(n,\rho)} \cap \Psi_\k$, we can repeat the previous reasoning based on the properties of an ellipsoid and write that
\begin{equation}\label{(3.29)}
R_1^{(n,\rho)}  \cap \Psi_\k \subseteq 
\left\{ 
\min_{1+\vep < r < 1/\vep'}\ 
{|\sqrt{\rho}(1-\k) - \g(r)|^2 \over r} 
\le 
\hat 
\Delta_{\max}^{(n,\rho)}
\right\}.
\end{equation}
With the help of the second part of Lemma 
\ref{L:4.2}, we deduce from \eqref{(3.29)} that 
\begin{equation}\label{(3.30)}
P(R_1^{(n,\rho)} \cap \Psi_\k) \le P\left( \hat 
\Delta_{\max}^{(n,\rho)}
\ge F^{(2)}(\sqrt \rho(1-\kappa), \vep,\vep', \tau)\right),
\end{equation}
where 
$$
F^{(2)}\left(\sqrt \rho(1-\kappa), \vep,\vep', \tau)\right)= 
 \vep' \left( (\vep')^2 (1-\tau) - q\vep' +1\right)^2, 
$$
under condition 
\begin{equation}\label{(3.31)}
1/\vep' < { q + \sqrt{ q^2 - 4(1-\tau)}\over 2},
\quad q = \sqrt\rho(1-\k).
\end{equation}
If $\vep'= (\sqrt{\rho}(1-h))^{-1}$, and $\k< h$ then 
\eqref{(3.31)} is verified and 
$$
F^{(2)}\left(\sqrt \rho(1-\kappa), \vep,\vep', \tau)\right),
= \sqrt\rho(h-\k)^2(1+o(1)).
$$
Substituting this relation into the right-hand side 
of \eqref{(3.30)}, 
we obtain the following upper bound, 
$$
P(R_1^{(n,\rho)} \cap \Psi_\k) \le P\left( \hat 
\Delta_{\max}^{(n,\rho)}
\ge \sqrt\rho (h-\k)^2\right).
$$
It follows from the last estimate and \eqref{(3.19)} that for any 
$h>\kappa$ we have 
$$
P(R_1^{(n,\rho)} \cap \Psi_\k) \le {1\over 
n^{\log(\sqrt{\chi_n \rho_n} (h-\kappa)^2(1+o(1))}}.
$$
Therefore we can write that 
\begin{equation}\label{(3.32)}
\sum_{n\ge 1} 
P(R_1^{(n,\rho)} \cap \Psi_\k)< \infty.
\end{equation}

\v
Let us estimate 
the probability of $\bar \Psi_\k$.
The maximal eigenvalue of the adjacency matrix $A^{(n,\rho_n)}$ 
\eqref{(2.1)} of the Erd\H os-R\'enyi random graphs $G(n,p_n)$ has been
studied by Krivelevich and Sudakov \cite{KS}. 
It follows from the results of \cite{KS} that with large probability,
the maximal eigenvalue  is greater than $(1+o(1))\max\left( \sqrt{D_{\max}}, np_n\right)$,
where $D_{\max}$ is the maximal vertex degree of the graph 
$\Gamma^{(n,\rho_n)}$, that is 
$$
P\left\{ \omega:\  \l_1\left(A^{(n,\rho_n)}(\omega)\right)\ge \max\left( 
\sqrt{\max_{i=1, \dots, n} b^{(n,\rho_n)}_{ii}}, \rho_n\right)(1+o(1))\right\} \to 1,
$$
in the limit $(\rho,\chi)_n\to \infty$ \eqref{(2.5)}. 
Then we can write that 
\begin{equation}\label{(3.33)}
\bE \l_1(\tA^{(n,\rho_n)})  \ge (1+o(1)) \sqrt {\rho_n}, \quad (\rho,\chi)_n\to\infty.
\end{equation}
In paper \cite{LMZ}, the following inequality 
\begin{equation}\label{(3.34)}
P\left( \sup_{\rho_n > C \log n} 
\left\vert  \l_1(\tA^{(n,\rho_n)}) - 
\bE \l_1(\tA^{(n,\rho_n)})\right\vert  
> {t\over \sqrt{\rho_n}}\right) \le 4 e^{-t^2/32}
\end{equation}
is proved for all $t>C$.  
Taking into account that $\rho_n = \chi _n \log n$ and choosing 
$t^2 = s^2 \rho_n$,  we deduce from \eqref{(3.34)} that 
\begin{equation}\label{(3.35)}
P\left( \left\vert  \l_1(\tA^{(n,\rho_n)}) - \bE \l_1(\tA^{(n,\rho_n )})\right\vert  >  s\right) \le {4\over n^{ \chi_n \, s^2/32 }}.
\end{equation}

It follows from \eqref{(3.33)} that 
$$
\bar \Psi_\k \subseteq 
\left\{ \l_{1}(\tilde A) \le \sqrt \rho(1-\k)\right\}
\subseteq \left\{ \bE \l_{1}(\tilde A)
-\l_{1}(\tilde A) \le \sqrt \rho\k (1+o(1))\right\}
$$
Then  \eqref{(3.35)} implies the following upper bound, 
$$
P\left(\bar \Psi_\k\right)\le 
P\left( |\bE \l_{1}(\tilde A)
-\l_{1}(\tilde A)| \ge \sqrt \rho\k (1+o(1))\right)\le 
{4\over n^{\chi_n \rho_n \k^2(1+o(1))/32}}.
$$
Then clearly 
$$
\sum_{n\ge 1} P(\bar \Psi_\kappa ) <\infty 
$$
for any $\kappa>0$.
Taking into account this convergence, as well as 
 \eqref{(3.28)}, \eqref{(3.32)} and \eqref{(3.35)}, 
 we conclude that 
$$
\sum_{n\ge 1} P\left( R^{(n,\rho_n)}_1\right)< \infty.
$$
Combining this relation with \eqref{(3.27)} and remembering 
\eqref{(3.12)}, we conclude that  
relation \eqref{(2.10)} is true in the case of $i=2$. 

\subsection{Proof of Theorem \ref{T1} in the case of $i=1$.}

We can write in analogy with \eqref{(3.11)} that
\begin{equation}\label{(3.36)}
\Phi(v)
\subseteq \left\{ \inf_{v \in D^{(1)}_{\vep, \vep'} }\ 
 \min_{i=1, \dots, n} {|\l_i(\tilde A) - \g^{(n,\rho)}(v)|^2 \over |v| }
\le  \hat \Delta_{\max}^{(n,\rho)}\right\}
= S^{(n,\rho)}(\vep, \vep'). 
\end{equation}
We introduce the subsets $S^{(n,\rho)}_1$ and $S^{(n,\rho)}_2$ similarly to \eqref{(3.14)} and \eqref{(3.15)}, where the 
minimum $M^{(2)}_r$
is replaced by 
$
M^{(1)}_r= \inf_{\vep'< r<1-\vep}
$.
Then we can write that
$$
S_2^{(n,\rho)} \cap \U_\de \subseteq 
\left\{M_r^{(1)} \, M_\vp {|\l_{\max}(\breve A) - \g(re^{i\vp})|^2 \over r }I_{\U_\de} \le  \hat \Delta_{\max}^{(n,\rho)}
\right\},
$$
and therefore
\begin{equation}\label{(3.37)}
P(
S^{(n,\rho)}_2 \cap \U_\de ) \le P(\hat \Delta_{\max}^{(n,\rho)}
\ge G^{(1)}(2+\de, \vep, \vep', \tau)).
\end{equation}
Relation \eqref{(4.19)} 
shows that $G^{(1)}(2+\de,\vep,\vep', \tau)= \vep^4(1+o(1))$ as $\vep \to 0$ 
and then  
\begin{equation}\label{(3.38)}
\sum_{n=1}^\infty P(S_2^{(n,\rho)}\cap \U_\de) < \infty
\end{equation}
with the same choice of $\vep_n = 2(\chi_n)^{-1/8}$ 
\eqref{(2.9)} and $\de_n = (\chi_n)^{-3/8}$ as in the previous subsection.

Let us study $S_1^{(n,\rho)}$.  Repeating the arguments of the previous
subsection, we can write that 
$$
S_1^{(n,\rho)} \cap \Psi_\k \subseteq 
\left\{ 
\min_{\vep' < r < 1-\vep}\ 
{|\sqrt{\rho}(1-\k) - \g(r)|^2 \over r} 
\le 
\hat 
\Delta_{\max}^{(n,\rho)}
\right\}
$$
and by Lemma \ref{L:4.3}, we have
\begin{equation}\label{(3.39)}
P(S_1^{(n,\rho)}\cap \Psi_\k) \le P\left( \hat 
\Delta_{\max}^{(n,\rho)}
\ge G^{(2)}(\sqrt \rho(1-\kappa), \vep,\vep', \tau)\right),
\end{equation}
Choosing $1/\vep'= \sqrt \rho(1-h)$,   
obtain we obtain with the help of \eqref{(4.20)}
  an upper estimate
$$
P(S_1^{(n,\rho)}\cap \Psi_\kappa)\le 
 P\left( \hat 
\Delta_{\max}^{(n,\rho)}
\ge \rho^{3/2}(1-h) (h-\k)^2\right).
$$
Then  \eqref{(3.19)} implies convergence 
\begin{equation}\label{(3.40)}
\sum_{n\ge 1} 
P(S_1^{(n,\rho)}\cap \Psi_\kappa)< \infty.
\end{equation}

 The upper bounds  for  the probabilities 
$P(\bar \U_\de)$ and $P(\bar \Psi_\kappa)$ 
in the case of $i=1$ are the same as in the case of $i=2$. 
Relations \eqref{(3.38)}, \eqref{(3.40)} together with 
\eqref{(3.21)} and \eqref{(3.35)} show that 
  convergence \eqref{(2.10)} is true  in the case of  $i=1$.
Theorem \ref{T1} is proved.

\subsection{The case of real $v$.}

Regarding the  particular case  $v\in \mathbb R$, we can use
 the following inequalities instead of \eqref{(3.6)},
\begin{equation}\label{(3.41)}
\vert \l_i(H^{(n,\rho)}(v)) - \l_i(\tilde A^{(n,\rho)} - \gamma^{(n,\rho)}(v)I) \vert \le  
|v | \, \hD^{(n,\rho)}_{\max}, 
\quad i=1, \dots, n, 
\end{equation}
where eigenvalues of $H^{(n,\rho)}$ and $\tilde A^{(n,\rho)}$ are ordered in, say, decreasing order. Relation 
\eqref{(3.41)} 
 is a corollary of more precise Weyl's inequality for hermitian matrices (see e.g. \cite{HJ}). 
Using \eqref{(3.41)},  we can write that
$$
\left\{ \omega: \det(H(v))=0\right\}=\cup_{i=1}^n 
\left\{ \omega: \l_i(H(v))=0\right\}
$$
and 
$$
\left\{ \omega: \l_i(H(v))=0\right\}
\subseteq
\left\{ {| \l_i (\tilde A) - \gamma(v)|\over \vert v \vert}
\le \hat \Delta^{(n,\rho)}_{\max}\right\}.
$$
Then, in complete analogy with \eqref{(3.2)}, \eqref{(3.11)} and \eqref{(3.12)},
we get inclusions
$$
\hat \Phi^{(2)}_{n,\rho}(\hat \vep, \vep')
= \cup_{1+\hat \vep < v<1/\vep'} 
\left\{ \omega: \det(H(v))=0\right\}
\subseteq \hat R^{(n,\rho)}_1\cup R^{(n,\rho)}_2,
$$
where 
$$
\hat R_1^{(n,\rho)}= \left\{ M_v^{(2)}  {| \l_1 (\tilde A) - \gamma(v)|\over \vert v \vert}
\le \hat \Delta^{(n,\rho)}_{\max}\right\}
$$
and 
$$
\hat R_2^{(n,\rho)}= \left\{ M_v^{(2)}  \min_{i=2, \dots, n} {| \l_i (\tilde A) - \gamma(v)|\over \vert v \vert}
\le \hat \Delta^{(n,\rho)}_{\max}\right\}
$$
with $M_v^{(2)}= \min_{1+\hat \vep< v< 1/\vep'}$. 
Regarding the last event,
we can repeat all computations of the previous subsection 
with $F^{(1)}(2+\delta, \vep, \vep', \tau)$ replaced by
$$
\hat F^{(1)}(2+\de, \hat \vep, \vep', \tau)= 
{ \vep^2 - \de(1+\vep) - \tau(1+\vep)^2\over (1+\vep)^3}
=O(\hat \vep^2), \quad \hat \vep\to 0,  
$$ 
in the asymptotic regime when $\delta = \hat \vep^2/h $ with sufficiently large  $h$. 
In this case relations \eqref{(3.18)} and \eqref{(3.19)} take the form
\begin{equation}\label{(3.42)}
P(\hat R^{(n,\rho)}_2 \cap \U_\de) \le P\left(\hat \Delta_{\max}^{(n,\rho)}
\ge \hat \vep^2
\right)
\le {1\over n^{ \log(\hat \vep^2 \sqrt{ \chi_n}(1+o(1)))}}\ , \quad n\to \infty. 
\end{equation}
It is easy to see that  \eqref{(2.11)} is sufficient for the convergence of the series \eqref{(3.42)}, 
\begin{equation}\label{(3.43)}
\sum_{n\ge 1} P\left( \hat R^{(n,\rho_n)}_2\cap \U_{\de_n}\right)< \infty,
\quad \de_n = \hat \vep_n^2/h.
\end{equation}
It follows from  the upper bound \eqref{(3.26)} that
to have the series of $P(\bar \U_{\de_n})$ convergent, we need to make
the value $\chi_n \de_n^2$ sufficiently large. This observation together with the last condition of \eqref{(3.43)} shows that 
the rate \eqref{(2.11)} is close to the optimal one from the technical point of view. 

\section{Auxiliary results and statements}\label{S4}

\subsection{Proof of Lemma 1.1 by Stark and Terras}

We reproduce here 
the proof of Lemma \ref{L:ST} given by Stark and Terras \cite{ST}. 
It is based on the observation 
that for finite $(q+1)$-regular graph relation (1.3) takes the form
\begin{equation}\label{(4.1)}
\left(Z_{X^{(q+1)}}(u)\right)^{-1} = (1-u^2)^{r-1} \prod_{j=1}^n \left(1 - u \l_j + qu^2\right),
\end{equation}
where $\l_1\le \dots\le \l_n$ are eigenvalues of $A$. 
Then the  poles of $Z_{X^{(q+1)}}(u)$ are given by zeros of 
quadratic polynomials $1-u \l_j + qu^2$, $1\le j\le n$. 

One can write 
$$
1 - u\l_j +q u^2 = (1- \a_j u) (1-\b_j u)
$$
with $\a_j \b_j = q $ and $\a_j+\b_j= \l_j$. Then 
$\a_j$ and $\b_j$ are given by the roots of the quadratic equation
$$
x^2 - \l_j x + q=0
$$
and therefore
\begin{equation}\label{(4.2)}
\a_j, \b_j = { \l_j + \sqrt{\l_j^2 - 4q}\over 2}.
\end{equation}
Thus the values $\a_j$ and $\b_j$ are complex conjugate
 if and only if 
\begin{equation}\label{(4.3)}
|\l_j|\le  2\sqrt q.
\end{equation}
and in this case  
$$
| \a_j|^2 = |\b_j|^2 = q.
$$ 
The last equalities mean that if $s= \s + i \tau$ is such that 
\begin{equation}\label{(4.4)}
q^s= \a_j\quad {\hbox{or}} \quad q^s= \b_j,
\end{equation}
then 
$Re(s)= \s=1/2$. 
If $|\l_j| = q+1$, then it follows from \eqref{(4.2)} and 
\eqref{(4.3)} that $Re(s)= 0$ or $1$. 
Finally, if $|\l_j|\neq q+1$, then $|\l_j|< q+1$ that therefore
\eqref{(4.4)} implies inequalities $0< Re(s)<1$. 
This argument completes the proof of equivalence between 
\eqref{(1.5)} and \eqref{(1.6)}. 

\subsection{Distance property of  ellipsoid.} 

Let ${\CE}(a,b)$ denote the family  of points on $\bR^2$ 
$$
\CE(a,b) = \left\{(s,t): s,t \in \bR, \ \left({s\over a}\right)^2
+ \left( { t\over b}\right)^2=1\right\}.
$$
We assume that $b<a$. 
 Given $x\in [0,a]$, we determine the distance $\CD(x)$ between the point 
 $(x,0)$ and the ellipse $\CE(a,b)$ by the formula 
 $$
 \CD(x)^2 = \min_{ (s,t) \in \CE(a,b)} ((s-x)^2 + t^2)= 
 \min_{ s\in [-a,a]} \phi(x,s),
 $$
 where 
 $$
 \phi(x,s)=  (s-x)^2 + b^2 - b^2 {s^2/ a^2}.
 $$

 \vskip 0.2 cm
 
\begin{lemma}\label{L:4.1} 	
There exists a critical point  $x_0= a(1-b^2/a^2)$ such that 
\begin{equation}\label{(4.5)}
 \CD(x)^2 = 
  \begin{cases}
  b^2 \left( 1 - {\displaystyle x^2\over \displaystyle a^2- b^2}\right) , & \text{if $0\le x \le x_0$}
   , \\
(a-x)^2, & \text {if $x_0 \le x \le a$ } 
\end{cases}
\end{equation}
\end{lemma}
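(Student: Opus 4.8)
The plan is to reduce the constrained minimization defining $\CD(x)$ to an unconstrained one-dimensional problem, and then to handle the resulting convex minimization by a two-case analysis according to whether the critical point falls inside or outside the admissible range $[-a,a]$. Since every point $(s,t)\in\CE(a,b)$ satisfies $t^2 = b^2(1-s^2/a^2)$ with $s\in[-a,a]$, I would substitute this relation so that $(s-x)^2+t^2$ becomes
\[
\phi(x,s) = (s-x)^2 + b^2\left(1 - {s^2\over a^2}\right),
\]
giving $\CD(x)^2 = \min_{s\in[-a,a]}\phi(x,s)$ exactly as recorded in the statement.

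Next I would observe that, for fixed $x\in[0,a]$, the map $s\mapsto\phi(x,s)$ is a quadratic with leading coefficient $1-b^2/a^2>0$ (here the hypothesis $b<a$ enters), hence strictly convex with a unique unconstrained minimizer. Solving $\partial_s\phi(x,s)=2s(1-b^2/a^2)-2x=0$ yields
\[
s_*(x) = {x\,a^2\over a^2-b^2}.
\]
Since $x\ge 0$, one has $s_*(x)\ge 0>-a$, so the minimum of $\phi(x,\cdot)$ over $[-a,a]$ is attained at $s_*(x)$ whenever $s_*(x)\le a$ and at the endpoint $s=a$ otherwise; and $s_*(x)\le a$ is equivalent to $x\le (a^2-b^2)/a = a(1-b^2/a^2) = x_0$, which is the claimed critical point.

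Finally I would evaluate $\phi$ at the appropriate minimizer in each regime. For $0\le x\le x_0$, using $s_*(x)-x = xb^2/(a^2-b^2)$, a routine substitution collapses $\phi(x,s_*(x))$ to $b^2 - x^2b^2/(a^2-b^2) = b^2\bigl(1 - x^2/(a^2-b^2)\bigr)$, the first branch of \eqref{(4.5)}. For $x_0\le x\le a$ the minimum is at $s=a$, where $b^2(1-s^2/a^2)$ vanishes, so $\phi(x,a)=(a-x)^2$, the second branch; one checks that at $x=x_0$ both expressions equal $b^4/a^2$, so the two branches match continuously. I do not expect any genuine obstacle here: the only point deserving attention is that the clean dichotomy between an interior and a boundary minimizer rests on strict convexity (thus on the standing assumption $b<a$), and that the endpoint $s=-a$ never competes because $x$ is taken non-negative.
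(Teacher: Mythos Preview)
Your proposal is correct and follows essentially the same approach as the paper: both reduce to the one-variable function $\phi(x,s)$, locate its critical point $s_*(x)=x/(1-b^2/a^2)$ via the derivative, and split according to whether $s_*(x)\le a$, i.e.\ $x\le x_0$. Your write-up is in fact slightly more careful, as you make the strict convexity (hence the role of $b<a$) explicit, rule out the endpoint $s=-a$, and verify that the two branches agree at $x=x_0$.
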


\begin{proof}
The proof of Lemma \ref{L:4.1}  is based on the elementary analysis of the derivative
$$
{\partial \over \partial s} \phi(x,s) = 2(s-x) - 2b^2 s/a^2.
$$
If $x\in [0, x_0]$, then this derivative equals to zero  at the point 
$\tilde s = \tilde s(x)= x/(1-b^2/a^2)$ and 
$$
\CD(x)^2 = \phi(x,\tilde s(x)).
$$
If $x\in [x_0,a]$, then the derivative $\phi_s'(x,s)$ has no zero on the interval
$s\in [0,a]$ and therefore
$$
\CD(x)^2 = \phi(x,a)= (a-x)^2.
$$
This means that if $x<x_0$, then the corresponding point $\tilde l= (\tilde s,\tilde t)$ is such that $\tilde t>0$; if $x\ge x_0$, then the point $\tilde s$ coincides with the right extremity of the ellipsoid, $\tilde s=a$ for all $x\ge x_0$. 
The derivative of $\CD(x)^2$ is a discontinuous function and that the distance $\CD(x)^2$ shows a kind of "phase transition" of the first order at $x=x_0$. 
\end{proof}

\vskip 0.2cm

Let us point out that the distance  $\CD(x)$ (5.5) is a decreasing function for all $x\in [0, a]$ and for any $s\in [0,a]$,
\begin{equation}\label{(4.6)}
\min_{x\in [0, s]} \CD(x) = \CD(s).
\end{equation}

Now we turn to  the case of $\gamma^{(n,\rho)}(v)$ \eqref{(3.13)}. 
Denoting $s= Re(\g^{(n,\rho)}(v))$ and $t= Im(\g^{(n,\rho)}(v))$, 
we observe that the family of points 
$$
\dot \CE = \CE(\dot a, \dot b)= \left\{ \gamma^{(n,\rho)}(re^{i\vp}), \quad 0\le \vp< 2\pi\right\}
$$ 
is given by an ellipsoid with the half-axes
$$
\dot a = r(1-\tau) + {1\over r}, 
\quad \dot b= r(1-\tau)- {1\over r}.
$$
Regarding  the difference between $\l\in \bR$ and $\g^{(n,\rho)}$,
we see that its absolute value is bounded from below by the distance \eqref{(4.5)}
$$
| \l - \g^{(n,\rho)}|^2
\ge \CD(\l)^2
$$
with $a$ and $b$ replaced by $\dot a$ and $\dot b$, respectively.  
Taking into account that
$$
\dot x_0= {\dot a ^2 - \dot b^2\over \dot a}=
{4(1-\tau)\over r(1-\tau)+1/r},
$$
one can easily see that if
$$
\vep < {1} \quad {\hbox{and}} \quad  \tau < {1},
$$
then $\dot x_0< 2+\de$. 
Therefore we can write a version of \eqref{(4.6)}, 
\begin{equation}\label{(4.7)}
\min_{\l \in [0, 2+\de]} \ \min _{\vp\in [0, 2\pi)} | \l - \g^{(n,\rho)}|^2
= \left(r (1-\tau) + {1\over r} - 2 - \de\right)^2.
\end{equation}

\v 
Elementary analysis of the function
$$
\g(x)= x (1-\tau)+{1\over x}
$$
shows that if
\begin{equation}\label{(4.8)}
\vep > {1\over \sqrt{1-\tau}} -1 \quad {\hbox{and}} \quad 
\vep > \vep_0= {(2\tau+\de) + \sqrt{4\de + \de^2 
 +4\tau}\over 2(1-\tau)},
\end{equation}
then 
\begin{equation}\label{(4.9)}
\inf_{1+\vep < x} \g(x) >2+\delta.
\end{equation}

Regarding \eqref{(4.7)}, we see 
that it remains to study the minimal value of the function 
$$
f(x)={1\over x} { \left(x(1-\tau) + {1\over x} - q\right)^2}=
{ \big(x^2(1-\tau)  - qx +1\big)^2\over x^3}
$$ 
over the interval $x\in (1+\vep, 1/\vep')$
in the case when $q= 2+\delta$.
Elementary analysis shows that the  derivative  $f'(x)$ has four zeroes, 
$$
x_{1,2} = { q\mp \sqrt{q^2 - 4(1-\tau)}\over 2(1-\tau)}
\quad {\hbox{and}} \quad 
x_{3,4} = { -q \mp \sqrt{q^2 +12(1-\tau)}\over 2(1-\tau)}, 
$$
such that  $x_3< 0 < x_1< x_4<1$  and 
$$
1< x_2= {  q+ \sqrt{q^2 - 4(1-\tau)}\over 2(1-\tau)}.
$$
We will also need to minimize   $f(x)$ in the case when $q= \sqrt {\rho}(1 - \kappa)$.

\vskip 0.5cm
\begin{lemma}\label{L:4.2}
Let positive $\vep$ and $\vep'$ verify inequality
$1+\vep < 1/\vep'$. If $q$ is greater than $2-\tau$ and such that 
$x_2< 1+\vep$, then 
\begin{equation}\label{(4.10)}
F^{(1)}(q, \vep, \vep', \tau ) = \inf_{ 1+\vep<  x < 1/\vep'} f(x)= f(1+\vep)=
{ \left((1+\vep)^2(1-\tau) - q(1+\vep) +1\right)^2\over (1+\vep)^3}.
\end{equation}
If $q$ is greater than $2+\tau$ and such that  $1/\vep' < x_2$, then
\begin{equation}\label{(4.11)}
F^{(2)}(q, \vep, \vep', \tau )= \inf_{ 1+\vep<  x < 1/\vep'} f(x) =f(1/\vep')= 
{1\over\vep'} {\left(  (1-\tau) - q\vep' +(\vep')^2\right)^2}.
\end{equation}
	\end{lemma}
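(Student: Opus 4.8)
\emph{Proof plan.}
The idea is to reduce both identities to a single sign analysis of $f'$ on $(0,\infty)$. First I would write $f(x)=p(x)^2/x^3$ with $p(x)=x^2(1-\tau)-qx+1$ and differentiate; a short computation factors out $p(x)$:
\[
f'(x)=\frac{p(x)\,\bigl(2x\,p'(x)-3p(x)\bigr)}{x^{4}}
      =\frac{p(x)\,\tilde p(x)}{x^{4}},\qquad
\tilde p(x)=x^2(1-\tau)+qx-3 .
\]
Since $q>2-\tau$ gives $q^2>(2-\tau)^2>4(1-\tau)$, the quadratic $p$ has the two distinct positive roots $x_{1,2}$ written just before the lemma, while $\tilde p$ has roots $x_{3,4}$ with $x_3<0<x_4$. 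Factoring $p(x)=(1-\tau)(x-x_1)(x-x_2)$ and $\tilde p(x)=(1-\tau)(x-x_3)(x-x_4)$, we see that for $x>0$ the sign of $f'(x)$ is that of $(x-x_1)(x-x_2)(x-x_4)$, because $x-x_3>0$.

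Next I would pin down the ordering $0<x_1<x_4<x_2$. Writing each of $x_1<x_4$ and $x_4<x_2$ with one radical on each side of a true inequality and squaring (both sides are then visibly nonnegative) reduces it to an elementary estimate equivalent to $q>2\sqrt{1-\tau}$, which $q>2-\tau$ guarantees; hence the three roots are correctly ordered. I would also record the sharper fact that $x_4<1$ \emph{precisely} when $q>2+\tau$: squaring $\sqrt{q^{2}+12(1-\tau)}<q+2(1-\tau)$ collapses to $3<q+1-\tau$. The resulting sign table is $f'<0$ on $(0,x_1)$, $f'>0$ on $(x_1,x_4)$, $f'<0$ on $(x_4,x_2)$, $f'>0$ on $(x_2,\infty)$; in particular $f$ increases on $(x_2,\infty)$ and decreases on $(x_4,x_2)$.

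The two claims now follow by locating the interval $(1+\vep,1/\vep')$ inside a single monotone branch. For \eqref{(4.10)}: if $q>2-\tau$ and $x_2<1+\vep$, then $(1+\vep,1/\vep')\subset(x_2,\infty)$, where $f$ is increasing, so the infimum equals $f(1+\vep)$, and substituting $x=1+\vep$ into $p(x)^2/x^3$ gives exactly $F^{(1)}(q,\vep,\vep',\tau)$. For \eqref{(4.11)}: if $q>2+\tau$ (so that $x_4<1<1+\vep$) and $1/\vep'<x_2$, then $(1+\vep,1/\vep')\subset(x_4,x_2)$, where $f$ is decreasing, so the infimum equals $f(1/\vep')$; writing $p(1/\vep')=(\vep')^{-2}\bigl((1-\tau)-q\vep'+(\vep')^{2}\bigr)$ and simplifying $(\vep')^{3}\,p(1/\vep')^{2}$ yields $F^{(2)}(q,\vep,\vep',\tau)$.

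The only genuinely delicate step is the chain of root comparisons $x_1<x_4<x_2$ together with the characterization $x_4<1\iff q>2+\tau$; this is what produces the two hypotheses $q>2-\tau$ and $q>2+\tau$, and it is the place where one must check that each squaring is sign-preserving (both sides nonnegative under the stated lower bound on $q$). Everything else — the derivative computation, the sign table, and the endpoint evaluations — is routine bookkeeping.
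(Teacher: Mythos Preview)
Your argument is correct and follows the same route as the paper: compute $f'$, locate its four zeros $x_1,x_2,x_3,x_4$ (already displayed just before the lemma), read off the monotone branches, and place the interval $(1+\vep,1/\vep')$ inside a single branch. The paper's own proof is only the terse observation that $f$ has a local minimum at $x_2$, is decreasing on $[1,x_2)$ and increasing on $(x_2,\infty)$; you supply exactly the details this hides --- in particular your clean identification of $x_4<1\iff q>2+\tau$ is what makes the ``decreasing on $[1,x_2)$'' claim legitimate in the second case.
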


\begin{proof} Proof of Lemma \ref{L:4.2} is based on the  observation that
$f(x)$ has a local minimum at $x_2$ and is strictly 
decreasing on the interval $[1, x_2)$ and strictly increasing 
on the interval $(x_2, +\infty)$. Simple computations show that  \eqref{(4.10)} and \eqref{(4.11)} are true. 
\end{proof}

\v
Regarding  our main asymptotic regime when 
\begin{equation}\label{(4.12)}
\vep, \de, \tau \to 0, \quad \de = o( \vep^2), \quad 
{\hbox{and }}\quad \tau = o(\de),
\end{equation}
 we conclude that condition \eqref{(4.8)}
and the conditions of Lemma \ref{L:4.2}
 are satisfied and deduce from \eqref{(4.10)} that
asymptotic relation 
\begin{equation}\label{(4.13)}
F^{(1)}(2+\de , \vep , \vep', \tau) = \vep^4(1+o(1)), \quad \vep \to 0
\end{equation}
is true. 

\v 
Regarding \eqref{(4.11)} in the case when 
$q= \sqrt \rho(1-\kappa)$ and 
$1/\vep' = \sqrt \rho(1-h)$,
we conclude that in the limit of infinite $\rho$,  condition 
$\kappa< h$ is sufficient for inequality $1/\vep' < x_2$ to hold asymptotically. Simple computation shows that in this case  
\begin{equation}\label{(4.14)}
F^{(2)}(\sqrt \rho(1-\kappa), \vep, \vep', \tau)= 
\sqrt \rho\,  {(h-\kappa)^2} (1+o(1)), \quad \rho\to\infty.
\end{equation}

\vskip 0.5cm 
Let us study the minimum of $\g(x) $  over the interval $r\in (\vep', 1-\vep)$. The first observation is  that if
\begin{equation}\label{(4.15)}
\vep > { - (2\tau + \de) + \sqrt{\de^2 + 4\de + 4\tau}\over 2(1-\tau)},
\end{equation}
then the following analogue of \eqref{(4.9)} is verified,
\begin{equation}\label{(4.16)}
\min_{0<x < 1-\vep} \g(x) >2+\de.
\end{equation}

\begin{lemma}\label{L:4.3} Let positive $\vep$ and $\vep'$ verify inequality
$\vep'< 1-\vep$. If $q$ is greater than $2+\tau$ and 
such that $1-\vep < x_1$, then 
\begin{equation}\label{(4.17)}
G^{(1)}(q, \vep, \vep', \tau ) = \inf_{ \vep'<  x < 1-\vep} f(x)
= f(1-\vep)=
{ \left(\vep^2 - \de(1+\vep) - \tau(1+\vep)^2\right)^2\over (1-\vep)^3}.
\end{equation}
If $q$ is such that $x_1< \vep' $, then
\begin{equation}\label{(4.18)}
G^{(2)}(q, \vep, \vep', \tau )= \inf_{ \vep'<  x < 1-\vep} f(x) 
=f(\vep')
= 
{ \left( (\vep')^2 (1-\tau) - q\vep' +1\right)^2\over (\vep')^3}.
\end{equation}
\end{lemma}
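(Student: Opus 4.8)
The plan is to reduce the lemma, exactly as in the proof of Lemma~\ref{L:4.2}, to an elementary monotonicity analysis of
$$
f(x)=\frac{g(x)^2}{x^3},\qquad g(x)=x^2(1-\tau)-qx+1,
$$
but now over the interval $(0,1)$ instead of over $(1,+\infty)$. First I would differentiate and factor: a short computation gives
$$
f'(x)=\frac{g(x)\,h(x)}{x^4},\qquad h(x)=2xg'(x)-3g(x)=x^2(1-\tau)+qx-3 ,
$$
so that the positive critical points of $f$ are precisely the roots $x_1<x_2$ of $g$ (at which $f$ vanishes) together with the unique positive root $x_4$ of $h$. The hypothesis $q>2+\tau$ forces $q^2>4(1-\tau)$, so $x_1$ is real, and together with the root ordering already recorded above, $0<x_1<x_4<1<x_2$, one argues as follows. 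Since $g(0)=1>0$, $g>0$ on $(0,x_1)$ and $g<0$ on $(x_1,x_2)$, while $h(0)=-3<0$ gives $h<0$ on $(0,x_4)$ and $h>0$ on $(x_4,+\infty)$; hence $f'=gh/x^4$ is negative on $(0,x_1)$, positive on $(x_1,x_4)$ and negative on $(x_4,x_2)$, so $f$ is strictly decreasing, then strictly increasing, then strictly decreasing on these three intervals. In particular $x_1$ is a global minimum of $f$ on $(0,+\infty)$ (with $f(x_1)=0$) and $x_4$ a local maximum. (As a consistency check, the same pattern follows from $f(0^+)=+\infty$, $f(x_1)=f(x_2)=0$, and $f\ge 0$.)

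Next I would split according to where the endpoints $\vep'$ and $1-\vep$ lie relative to $x_1$. In case \eqref{(4.17)}, the hypothesis $1-\vep<x_1$ places the whole interval $(\vep',1-\vep)$ inside the decreasing branch $(0,x_1)$, so the infimum is attained at the right endpoint and equals $f(1-\vep)$; evaluating $g(x)^2/x^3$ at $x=1-\vep$ (and, for the application, specializing $q=2+\de$) produces the displayed formula. In case \eqref{(4.18)}, the hypothesis $x_1<\vep'$ puts $(\vep',1-\vep)$ to the right of the global minimum $x_1$; provided the interval also stays to the left of the local maximum $x_4$, $f$ is strictly increasing there and the infimum is $f(\vep')=\big((\vep')^2(1-\tau)-q\vep'+1\big)^2/(\vep')^3$, which is the asserted expression. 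For completeness I would note that when $x_4<1-\vep$ — the situation occurring in the application to $S_1^{(n,\rho)}$, where $q=\sqrt{\rho}(1-\kappa)$ and hence $x_4=(3/q)(1+o(1))\to 0$ — the infimum is $\min\{f(\vep'),f(1-\vep)\}$, and that for the parameters $1/\vep'=\sqrt{\rho}(1-h)$ with $\kappa<h$ both endpoint values are of order at least $\sqrt{\rho}$, which is all that \eqref{(3.39)}--\eqref{(3.40)} require.

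The main obstacle is not the calculus but the bookkeeping: for each of the two regimes of $q$ one must check carefully that $(\vep',1-\vep)$ genuinely sits inside a single monotone branch of $f$ — i.e. track the positions of the two relevant critical points $x_1$ and $x_4$ against the endpoints $\vep'$ and $1-\vep$ — or else verify that the endpoint minimum still has the required order. Once that placement is settled, the lemma collapses to the two routine endpoint evaluations above.
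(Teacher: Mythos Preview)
Your proposal is correct and follows the same elementary monotonicity analysis the paper has in mind; the paper's own proof reads in full: ``The proof of Lemma~\ref{L:4.3} is based on elementary computations that we do not present here.'' You actually go further than the paper by noticing that in the second regime the hypothesis $x_1<\vep'$ alone does not force $(\vep',1-\vep)$ to lie in a single monotone branch of $f$ (since the local maximum $x_4$ may fall inside), and your endpoint-minimum remark together with the order estimate for both $f(\vep')$ and $f(1-\vep)$ is exactly the patch needed for the application in \eqref{(3.39)}--\eqref{(3.40)}.
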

\begin{proof}
	The proof of Lemma \ref{L:4.3} is based on elementary computations that we do not present here.
\end{proof}
 Regarding  the 
asymptotic regime \eqref{(4.12)}, we see that 
condition \eqref{(4.15)} 
and conditions of Lemma \ref{L:4.3} are  verified. Then 
we conclude that  
\begin{equation}\label{(4.19)}
G^{(1)}(2+\de , \vep , \vep', \tau) = \vep^4(1+o(1)), \quad \vep \to 0.
\end{equation}

\v 
 Regarding \eqref{(4.18)} in the case when 
$q= \sqrt \rho(1-\kappa)$ and 
$1/\vep' = \sqrt \rho(1-h)$,
we conclude that in the limit of infinite $\rho$,  condition 
$\kappa< h$ is sufficient for inequality 
$
\vep' > x_1
$
to hold asymptotically. 
Then relation
\begin{equation}\label{(4.20)}
G^{(2)}\left(\sqrt \rho(1-\kappa), \vep,\vep', \tau\right) = \rho^{3/2}(1-h)
(h-\kappa)^2(1+o(1))
\end{equation}
is true in the asymptotic regime \eqref{(4.12)}.

\subsection{Proof of F\" uredi-Koml\'os inequalities.} 
For completeness,  we reproduce the proof of inequalities 
\eqref{(3.16)}  resulting from  two lemmas below \cite{FK}.

\v 
\begin{lemma}\label{L:4.4} If $\tilde A=(a_{ij})$ is an $n\times n$
real symmetric matrix, and $ \breve A= \tilde A - tJ$ (where $J$ is the matrix with all $1$ entries), then
$$
\l_2(\tilde A)\le  \l_1(\breve A).
$$
\end{lemma}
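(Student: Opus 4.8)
The plan is to use the Courant--Fischer minimax characterization of eigenvalues together with the fact that $tJ$ is a rank-one positive semidefinite perturbation (assuming $t \ge 0$; if $t < 0$ the roles of $\breve A$ and $\tilde A$ simply interchange and the argument is symmetric, but the application in \eqref{(3.16)} has $t = \rho/(n\sqrt\rho) > 0$). Recall that $J = \mathbf{1}\mathbf{1}^{\mathsf T}$ where $\mathbf{1}$ is the all-ones vector, so $tJ$ is positive semidefinite of rank one. Writing the Courant--Fischer formula in the form
$$
\l_2(\tilde A) = \min_{\dim U = n-1} \ \max_{x \in U,\ \|x\|=1} \langle \tilde A x, x\rangle,
$$
the idea is to choose the clever test subspace $U_0 = \{x : \langle x, \mathbf{1}\rangle = 0\}$, which has dimension $n-1$.

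First I would observe that for every $x \in U_0$ we have $\langle Jx, x\rangle = |\langle \mathbf{1}, x\rangle|^2 = 0$, hence $\langle \tilde A x, x\rangle = \langle (\tilde A - tJ)x, x\rangle = \langle \breve A x, x\rangle$. Therefore
$$
\l_2(\tilde A) \le \max_{x \in U_0,\ \|x\|=1} \langle \tilde A x, x\rangle = \max_{x \in U_0,\ \|x\|=1} \langle \breve A x, x\rangle \le \max_{\|x\|=1} \langle \breve A x, x\rangle = \l_1(\breve A),
$$
where the last inequality is just the trivial bound by the maximum over the whole unit sphere. This completes the argument. (Here I am using the convention, consistent with \eqref{(3.16)}, that $\l_1$ denotes the largest eigenvalue and $\l_2$ the second-largest; with the opposite ordering convention one instead picks $U_0$ as a test space in the $\max$-$\min$ formula for $\l_{n-1}$, giving the companion inequality $\l_n(\breve A) \le \l_{n-1}(\tilde A)$, which is exactly the other half of \eqref{(3.16)}.)

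There is no real obstacle here: the only thing to be careful about is the sign of $t$ and the ordering convention for eigenvalues, so that the rank-one perturbation $tJ$ is used in the correct direction. The essential content is simply that restricting the quadratic form of $\tilde A$ to the hyperplane $\mathbf{1}^\perp$, on which $J$ vanishes identically, makes $\tilde A$ and $\breve A$ agree, and a $\max$ over a hyperplane of dimension $n-1$ controls $\l_2$ from above by minimax while being itself controlled by $\l_1(\breve A)$. The companion lemma (presumably Lemma~4.5 in the paper) handling $\l_n(\tilde A) \ge \l_n(\breve A)$ would be proved by the mirror-image argument, and together they interlace to give \eqref{(3.16)}.
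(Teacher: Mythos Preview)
Your proof is correct and essentially identical to the paper's own argument: both apply the Courant--Fischer characterization of $\l_2(\tilde A)$, choose the test subspace $\mathbf{1}^\perp$, observe that $\tilde A$ and $\breve A$ agree as quadratic forms there because $J\bx = \mathbf{0}$ on that hyperplane, and finish by bounding the restricted maximum by $\l_1(\breve A)$. Your additional remarks on the sign of $t$ and the eigenvalue ordering convention are helpful clarifications but do not depart from the paper's route.
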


\begin{proof} Relation 
$\l_1(\tilde A)= \max_{\Vert {\bx }\Vert=1} \bx \tilde A \bx $ and the Courant-Fischer theorem 
imply that 
$$
\l_2(\tilde A)= \min_{\bv} \ \max_{(\bx, \bv)=0, \, \Vert \bx \Vert =1} \bx \tilde A\bx,
$$
and therefore 
$$
\l_2(\tilde A)\le\  \max_{(\bx, {\bf 1})=0, \, \Vert \bx \Vert =1} \bx \tilde A\bx=
\ \max_{(\bx, {\bf 1})=0, \, \Vert \bx \Vert =1} \bx (\breve A+tJ)\bx
$$ 
$$
=\ \max_{(\bx, {\bf 1})=0, \, \Vert \bx \Vert =1} \bx \breve A\bx\le \l_1(\breve A),
$$
since $(\bx, {\bf 1})=0$ implies $J\bx = {\bf 0}$.  Lemma \ref{L:4.4}
is proved. 
\end{proof}

\begin{lemma}\label{L:4.5} If $\tilde A=(a_{ij})$ is a real symmetric matrix, and
$\breve A=\tilde A - tJ$, $t>0$, then
$$
\l_{-\infty}(\tilde A)\ge \l_{-\infty}(\breve A).
$$
\end{lemma}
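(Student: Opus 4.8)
The plan is to mirror the proof of Lemma \ref{L:4.4}, replacing the Courant--Fischer characterization of the second largest eigenvalue $\l_2$ by the analogous characterization of the smallest eigenvalue $\l_{-\infty}$ (which in the notation of \eqref{(3.16)} is $\l_n$). First I would recall that for a real symmetric $n\times n$ matrix $M$ one has
$$
\l_{-\infty}(M) = \min_{\Vert \bx\Vert =1} \bx M \bx .
$$
Since this is a minimum over the whole unit sphere, restricting to any subset of the sphere can only increase it. In particular, restricting to the vectors orthogonal to $\mathbf 1$ gives
$$
\l_{-\infty}(\tilde A) \le \min_{(\bx,\mathbf 1)=0,\ \Vert \bx\Vert =1} \bx \tilde A \bx .
$$

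Next I would exploit the fact that on this restricted set $J\bx = \mathbf 0$, exactly as in the proof of Lemma \ref{L:4.4}, so that $\bx \tilde A \bx = \bx(\breve A + tJ)\bx = \bx \breve A \bx$ there. Hence
$$
\min_{(\bx,\mathbf 1)=0,\ \Vert \bx\Vert =1} \bx \tilde A \bx
= \min_{(\bx,\mathbf 1)=0,\ \Vert \bx\Vert =1} \bx \breve A \bx .
$$
But this is a minimum of the Rayleigh quotient of $\breve A$ over a subset of the unit sphere, which is always at least $\min_{\Vert \bx\Vert =1}\bx \breve A \bx = \l_{-\infty}(\breve A)$. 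Chaining the three displays gives $\l_{-\infty}(\tilde A) \ge \l_{-\infty}(\breve A)$, which is the claim. (I would be slightly careful about the direction of the two inequalities: passing to a subset of the sphere \emph{raises} the minimum, so both steps push in the same direction, and no Courant--Fischer min-max with an outer minimization over subspaces is actually needed here — the plain variational formula for the bottom eigenvalue suffices. If one prefers symmetry with the $\l_2$ argument, one can instead write $\l_{-\infty}(\tilde A) = -\l_{\max}(-\tilde A)$ and apply the analogue of Lemma \ref{L:4.4} to $-\tilde A$ with $-t$ in place of $t$, but since the sign of $t$ was not used there in an essential way, the direct argument is cleaner.)

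There is essentially no obstacle: the only point requiring a moment's thought is the inequality direction, i.e. confirming that shrinking the admissible set of test vectors increases (never decreases) the minimum of the Rayleigh quotient, and that the equality $\bx\tilde A\bx = \bx\breve A\bx$ holds precisely on $\{\bx:(\bx,\mathbf 1)=0\}$ because $J = \mathbf 1\mathbf 1^{T}$ kills such $\bx$. Once Lemma \ref{L:4.5} is established, combining it with Lemma \ref{L:4.4} (applied with $t=\rho/n$ and $J$ the all-ones matrix, so that $\breve A = \tA - (\rho/n)J$ differs from $\breve A^{(n,\rho)}$ of \eqref{(3.15)} only on the diagonal, a rank-one-type correction that is harmless for the eigenvalue interlacing as used in \cite{FK}) yields the full chain of interlacing inequalities \eqref{(3.16)} used in the proof of Theorem \ref{T1}.
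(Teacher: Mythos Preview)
Your chain of inequalities does not close. You correctly observe that restricting the minimum to a subset of the sphere can only \emph{increase} it, so
\[
\l_{-\infty}(\tilde A)\ \le\ \min_{(\bx,\mathbf 1)=0,\ \Vert\bx\Vert=1}\bx\tilde A\bx
\ =\ \min_{(\bx,\mathbf 1)=0,\ \Vert\bx\Vert=1}\bx\breve A\bx
\ \ge\ \l_{-\infty}(\breve A).
\]
But these two inequalities point in opposite directions relative to the middle quantity, so nothing follows about the order of $\l_{-\infty}(\tilde A)$ and $\l_{-\infty}(\breve A)$. A quick sanity check: your argument is completely symmetric in $\tilde A$ and $\breve A$ (they agree on $\{\bx:(\bx,\mathbf 1)=0\}$) and never uses the hypothesis $t>0$. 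If it were valid it would prove both $\l_{-\infty}(\tilde A)\ge\l_{-\infty}(\breve A)$ and $\l_{-\infty}(\breve A)\ge\l_{-\infty}(\tilde A)$, hence equality for every $t$, which is false.

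The paper's proof is different and uses $t>0$ in an essential way: since $tJ$ is positive semidefinite, one has the \emph{pointwise} inequality $\bx\tilde A\bx=\bx\breve A\bx+t\,\bx J\bx\ge\bx\breve A\bx$ for every unit vector $\bx$, and taking the minimum over the full sphere on both sides gives $\l_{-\infty}(\tilde A)\ge\l_{-\infty}(\breve A)$ directly. No restriction to $\mathbf 1^\perp$ is needed (or helpful) here; that trick is specific to Lemma~\ref{L:4.4}, where the outer minimization in Courant--Fischer absorbs the loss from restricting.
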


\begin{proof} For $t>0$ the matrix $tJ$ is positive definite
(i.e. $\bx tJ\bx\ge 0$ for all $\bx \in {\bf R}^n$), hence
$$
\l_{-\infty}(\tilde A)=
 \min_{\Vert \bx \Vert=1} \bx \tilde A\bx \ge 
 \min_{\Vert \bx \Vert=1} \bx \breve A\bx 
+  \min_{\Vert \bx \Vert=1} \bx t J\bx 
$$
$$
\ge  \min_{\Vert \bx \Vert=1} \bx \breve A\bx 
= \l_{-\infty}(\breve A).
$$
Lemma \ref{L:4.5} is proved.
\end{proof}

\subsection*{Acknowledgments.} 

The author would like to thank the anonymous referee for valuable 
remarks and discussion that helped me much to improve the paper. 


\end{document}